\crefname{hypothesis}{Hypothesis}{Hypotheses}
\title{Error Analysis and Parallel Scaling Study of A Parareal Parallel-in-Time Integration Algorithm for Particle-in-Fourier Schemes}
\author{Sriramkrishnan Muralikrishnan\thanks{J{\"u}lich Supercomputing Centre, Forschungszentrum J{\"u}lich GmbH, 52428 Germany.
  (\email{s.muralikrishnan@fz-juelich.de}, \email{r.speck@fz-juelich.de}).}
\and Robert Speck\footnotemark[1]}
\newcommand{\beq} {\begin{equation}}
\newcommand{\eeq} {\end{equation}}
\newcommand{\bdm} {\begin{displaymath}}
\newcommand{\edm} {\end{displaymath}}
\newcommand{\bit}{\begin{itemize}}
\newcommand{\eit}{\end{itemize}}
\newcommand{\bde}{\begin{description}}
\newcommand{\ede}{\end{description}}
\newcommand{\bce}{\begin{center}}
\newcommand{\ece}{\end{center}}
\newcommand{\ben} {\begin{enumerate}}
\newcommand{\een} {\end{enumerate}}
\newcommand{\bea} {\begin{eqnarray}}
\newcommand{\eea} {\end{eqnarray}}
\newcommand{\barr} {\begin{array}}
\newcommand{\earr} {\end{array}}
\newcommand{\bean} {\begin{eqnarray*}}
\newcommand{\eean} {\end{eqnarray*}}
\newcommand{\edoc} {

\graphicspath{{./figures/}}
\begin{document}

\maketitle

% REQUIRED
\begin{abstract}
    We propose a parareal based time parallelization scheme in the phase-space for the particle-in-Fourier (PIF) discretization of the Vlasov-Poisson 
    system used in kinetic plasma simulations. We use PIF with a coarse tolerance for the nonuniform fast Fourier transforms, or the standard particle-in-cell scheme, combined with temporal coarsening, as coarse propagators. This is different from the typical spatial coarsening of particles and/or Fourier modes for parareal, which are not possible or effective for PIF schemes. We perform an error analysis of the algorithm and verify the results numerically with Landau damping, two-stream instability, and Penning trap test cases
    in 3D-3V. We also implement the space-time parallelization of the PIF schemes in the open-source, performance-portable library IPPL and conduct scaling studies up to 1536 A100 GPUs on the JUWELS booster supercomputer. The space-time parallelization utilizing the parareal algorithm for the time parallelization provides up to $4-6$ times speedup compared to spatial parallelization alone and achieves a push rate of around 1 billion particles per second for the benchmark plasma mini-apps considered.  
\end{abstract}

% REQUIRED
\begin{keywords}
Particle-in-Fourier, Parareal, GPUs, Particle-in-cell, Parallel-in-time, Plasma physics 
\end{keywords}

% REQUIRED
\begin{MSCcodes}
35Q83, 65M75, 82D10
\end{MSCcodes}

\section{Introduction}

Particle-in-cell (PIC) schemes have been the method of choice for the simulation of kinetic plasmas since their inception \cite{hockney2021computer,birdsall2004plasma,dawson1983particle}. The simplicity, ease of parallelization and robustness for a wide variety of physical scenarios have contributed to the success of these schemes over the years. However, the presence of a grid in the standard PIC schemes leads to aliasing, as the particles live in the continuous phase-space and the modes
which are not resolved by the grid get aliased onto the lower frequency modes. This in turn results in a
numerical instability known as finite grid instability and loss of energy conservation when the Debye
length is not resolved \cite{langdon1970theory,huang2016finite,birdsall2004plasma}.

Previous efforts to improve conservation properties of the standard explicit PIC schemes 
fall into several categories. Earlier works such as \cite{lewis1970energy,eastwood1991virtual} 
discretize the Lagrangian formulation 
in \cite{low1958lagrangian} towards improving 
energy conservation. In \cite{chen2011energy, markidis2011energy, lapenta2017exactly, chen2020semi, pinto2022semi} 
the authors take advantage of fully implicit or 
semi-implicit time integration schemes to enforce exact energy conservation. The other category of energy conserving schemes are the structure preserving
geometric PIC schemes based on a variational 
formulation \cite{squire2012geometric,jianyuan2018structure,campos2022variational,campos2024variational} and 
discretization of the underlying Hamiltonian structure \cite{kraus2017gempic,he2016hamiltonian}. Structure preserving integrators for the Landau collision operator have been proposed in \cite{kraus2017metriplectic}. 
It should be noted that aliasing still occurs in these energy conserving schemes due 
to the presence of a grid but the effects are very much mitigated compared to the standard explicit PIC schemes \cite{barnes2021finite,xiao2019structure}.
Although many of these schemes have excellent long time stability and conservation properties, they differ significantly from the standard PIC framework and hence may not be as intuitive or easy to transition to from an application and implementation point of view.

The problems with aliasing in PIC schemes can be avoided if we 
interpolate from the particles directly to a truncated Fourier basis, solve the field equations in the Fourier space and interpolate the fields from the Fourier space back to the particle locations. This scheme, which is known
as particle-in-Fourier (PIF), is mentioned even in the very early literature on kinetic plasma simulations \cite{langdon1970theory} to study the effect of
using a spatial grid in PIC schemes. In \cite{evstatiev2013variational} the authors introduced a variational formulation of particle algorithms for kinetic plasma simulations based on 
Low's Lagrangian and recovered this scheme when a truncated Fourier basis is used. They also showed that PIF is a geometric structure-preserving scheme as it is derived from a Lagrangian, based on the principle of least action. In addition, they prove that in finite dimensions it is the only scheme which can simultaneously conserve charge, momentum, and energy before time discretization, whereas with any other basis functions the momentum conservation is lost due to lack of translational invariance. 

The electrostatic PIF scheme is studied in 
\cite{evstatiev2013variational,shadwick2014variational,huang2016finite,webb2016spectral}, gyrokinetic PIF scheme in \cite{ohana2016towards} and 
the electromagnetic PIF scheme in 
\cite{shadwick2014variational,ameres2018stochastic,campos2024variational}. However, in all these works it is considered as an aliasing free and highly accurate, albeit computationally not feasible approach, due to the expensive nonuniform discrete Fourier transforms (NUDFT), which are required for the interpolation from the particles to the Fourier space and vice versa. A hybrid gyrokinetic approach with PIF in the poloidal and toroidal directions and PIC in radial direction along with
particle decomposition parallelization (only particles are divided between the MPI ranks and each rank carries all the Fourier modes) is considered in \cite{briguglio2000parallelization} and scalability and efficiency are shown for a small number (up to 10) of processors.

Only in a fairly recent work \cite{mitchell2019efficient}, the authors used nonuniform fast Fourier transforms (NUFFT) \cite{dutt1993fast,dutt1995fast}, in the place of NUDFT, and showed that it is possible to obtain a practical scheme with the same computational complexity as in the standard PIC schemes. Since it was a proof-of-concept study, the authors considered only simple, small-scale examples with shared memory, CPU-based parallelism. Similar to \cite{briguglio2000parallelization} they also remarked that in the case of distributed parallelism a particle decomposition strategy is a sensible approach when the number of Fourier modes required in the simulation is relatively small.

However, when a lot of Fourier modes are required for the simulation then using particle decomposition for spatial parallelization stops scaling after a 
certain number of MPI ranks due to the high serial computation and communication costs associated with the modes. The typical domain decomposition approach of 
spatial parallelization as used in PIC schemes, where both the fields and particles are divided between MPI ranks, is also not very suitable for PIF schemes due to its relatively more global nature. Thus, we need to exploit other ways of parallelization to reduce the time to solution. In this work we propose a strategy for time parallelization of PIF schemes. This, together with the other developments happening in PIF schemes recently, such as \cite{shen2024particle}, where they have been extended to non-periodic boundary
conditions, will make them viable and scalable to very large-scale production simulations. 

Parallel-in-time (PinT) algorithms have a long history starting from the pioneering work of Nievergelt in 1964 \cite{nievergelt1964parallel}. There are different types of PinT algorithms and a comprehensive review can be found in \cite{gander201550,ong2020applications} and more information can be found on the community website~\cite{pintwebsite}. Parareal \cite{lions2001resolution}
is one of the simplest and mostly studied PinT algorithms. However, it has been mostly successful for diffusive problems and the reason for it is the challenge of finding effective coarse propagators in other scenarios.
These need to be cheap and at the same time provide ``accurate enough'' approximations to the fine propagator so that the algorithm converges in a small number of iterations relative to the number of time subdomains. Typically temporal and/or spatial coarsening of the step/mesh sizes is performed to obtain the coarse propagator in parareal. For particle-based algorithms like PIF, which follow the
characteristics, temporal coarsening is possible. However, as we will show later in Section \secref{numerical}, for complicated test cases which involve multiple time scales and oscillatory solutions, temporal coarsening is either not possible or can be done only by a small factor. It is not
clear how to perform spatial coarsening with respect to particles in PIF schemes, as we then need to interpolate between them while they are in random locations.
We can perform spatial coarsening by reducing the number of Fourier modes in the coarse propagator, however, if the high frequency modes are important in the application under consideration, then the coarse propagator will not be a good approximation of the fine propagator. 

In this work, we propose a parareal algorithm for PIF schemes, based on coarse propagators, which are obtained by using PIF with a coarse NUFFT tolerance, or the standard PIC scheme, combined with temporal coarsening. While using PIF with a lenient NUFFT tolerance may seem a natural and obvious algorithmic choice, the use of the standard PIC scheme as a coarse propagator for PIF is a non-standard and novel contribution of this work. This is because PIF is a structure preserving scheme while PIC is not. We support our choice of
coarse propagators through analysis
and validation on test cases which provide further insights into their effectiveness as well as scaling of the error with the number of particles, mesh size, NUFFT tolerance and the time step 
size. %We call this approach ``ParaPIF'', in a similar spirit to \cite{gander2020paraopt,gander2021parastieltjes}. 
\emph{The ultimate objective of our work is to make the expensive but stable and highly accurate PIF schemes viable for large scale production level 
electrostatic plasma simulations by using parallelization in time, taking advantage of the modern extreme-scale computing resources.}  

%This paper is organized as follows. In Sections \secref{pic} and \secref{pif} we introduce the PIC and PIF schemes and detail the steps involved in them.
%In Section \secref{parareal} we describe the parareal algorithm in general and then the ParaPIF algorithm which is an application to the PIF schemes. Section \secref{theory_convergence} proves the theoretical convergence of the ParaPIF algorithm with PIC and PIF as coarse propagators. In Section \secref{numerical} we first verify the theoretical convergence with numerical results in some of the standard plasma physics test cases and then perform
%a speedup analysis where we identify parameter combinations which gives best speedup for different benchmarks. With the identified combinations, we finally
%perform a scaling study up to thousands of GPUs. In Section \secref{conclusions} we present the conclusions and possible future works.  

\section{Particle-in-cell method}
\seclab{pic}

    In this work, we consider the non-relativistic \\ Vlasov-Poisson system with a fixed magnetic field and introduce the PIC method in that setting. Since PIC is more familiar and shares many features with PIF we consider it first. This will be also useful later since it will be used as one of the coarse propagators in the parareal algorithm. 

    The electrons are immersed in a uniform, immobile, neutralizing background ion population, and the electron dynamics is given by
    \begin{equation}
        \eqnlab{Vlasov}
        \frac{\partial f}{\partial t} + \vb \cdot \nabla_{\xb} f + \frac{q_e}{m_e}\LRp{\Eb + \vb \times \B_{ext}} \cdot \nabla_{\vb} f = 0,
    \end{equation}
where $\Eb = \Eb_{sc} + \Eb_{ext}$ is the total electric field, $\Eb_{ext}$ and $\B_{ext}$ are the known external electric and magnetic fields. Here, $f(\xb,\vb,t)$ is the electron phase-space distribution and $q_e$, $m_e$ are the electron charge and mass respectively. The total
electron charge in the system is given by $Q_e=q_e\int\int f d\xb d\vb$, the electron charge density by $\rho_e(\xb) = q_e\int f d\vb$ and the
constant ion density by $\rho_i = Q_e/\int d\xb$. Let us denote the permittivity of free space by $\varepsilon_0$. The self-consistent potential ($\phi$) and the electric field ($\Eb_{sc}$) due to space charge are given by 
\begin{align}
    \eqnlab{potential}
    \quad -\Delta \phi &= \rho/\varepsilon_0 = \LRp{\rho_e - \rho_i}/\varepsilon_0, \\
    \Eb_{sc} &= -\nabla \phi.
\end{align}
%where $\varepsilon_0$ is the permittivity of free space.
%Throughout this paper we use bold letters for vectors and matrices whereas non-bold letters are used for scalars. 

%PIC method description
The PIC method discretizes the phase-space distribution $f(\xb,\vb,t)$ in a Lagrangian way by means of macro-particles (hereafter referred to as ``particles'' for simplicity) as 
\begin{equation}
 \eqnlab{dist_f}
     f\LRp{\xb,\vb,t} = \sum_{j=1}^{N_p} w_j S\LRp{\xb-\xb_j} \delta\LRp{\vb-\vb_j},
 \end{equation}
 where $w_j$ is the particle weight, $S\LRp{\xb-\xb_j}$ is the shape function in the configuration space and $\delta\LRp{\vb-\vb_j}$ is the Dirac-delta function in the velocity space. At time $t=0$, the
distribution $f$ is sampled, which leads to the creation of computational particles. Subsequently, a typical computational cycle in PIC with fast Fourier transform (FFT)-based field solver\footnote{Even though there are other flavors in PIC we choose this particular type as it is the simplest and the most relevant type for the current study.} consists of the following steps:

\begin{enumerate}
    \item \label{step1_pic} Assign a shape function - e.g. cloud-in-cell \cite{birdsall2004plasma} - to each particle $j$ and deposit the electron charge onto an underlying mesh. This is known as ``scatter'' in PIC.
    \item \label{step2_pic} Take FFT of the charge density and solve the Poisson equation in Fourier space. Use inverse FFT to get the electric field on the grid.
    \item Interpolate $\Eb$ from the grid points to the particle locations ${\xb}_j$ using the same interpolation function as in the scatter operation. This is 
        typically known as ``gather'' in PIC.
    \item By means of a time integrator, advance the particle positions and velocities using
        \begin{equation}
        \eqnlab{velposPIF}
            \frac{d\vb_j}{dt} = \frac{q_e}{m_e}\LRp{\Eb + \vb \times \B_{ext}}|_{\xb=\xb_j}, \quad
            \frac{d\xb_j}{dt} = \vb_j.
        \end{equation}
\end{enumerate}

\section{Particle-in-Fourier method}
\seclab{pif}

 The PIF scheme, similar to the PIC scheme, discretizes the distribution function $f$, by means of particles as in \eqnref{dist_f}.
 The main difference, however, is that the charge density is scattered directly onto the Fourier space which gives
 \begin{align} 
\rhok\LRp{\kb} &= \frac{1}{L^3}\int\rho\LRp{\xb}\exp\LRp{-i\kb\cdot\xb} d\xb, \\
%&= \frac{1}{L^3}\int\sum_{j=1}^{N_p}q_e S\LRp{\xb-\xb_j}\exp\LRp{-i\kb\cdot\xb} d\xb, \\
\eqnlab{scatter_pif}
            &= \frac{q_e\Sk}{L^3}\sum_{j=1}^{N_p}\exp\LRp{{-i\kb\cdot\xb_j}}.
\end{align}
This in contrast to PIC, in steps \ref{step1_pic} and \ref{step2_pic}, where the charge density is first scattered onto a real space grid, and then a uniform FFT is used to transform it to  the Fourier space.

For the simplicity of exposition we consider three dimensions, $N$ is the number of Fourier modes in each dimension, $N_m = N^3$ is the total number 
of modes and $L$ is the length of the domain in each dimension. We 
also assume $N$ to be even and take the Fourier modes $\kb \in K_N = \LRc{\frac{2\pi}{L}[0,N-1]}^3$. $\Sk=\mc{F}\LRp{S(x)}$ is the Fourier transform of the shape function $S$ which is usually available in analytic form. With the charge density in the Fourier space the Poisson equation is solved. Then the electric field is gathered from the Fourier space to the particle positions using
 \begin{equation}
\eqnlab{gather_pif}
\Eb(\xb_j) = \sum_{\kb\in K_N}\Ebk\Sk\exp\LRp{{i\kb\cdot\xb_j}}.
\end{equation}
With this electric field the particle velocities and positions are updated using equation \eqnref{velposPIF} in the same way as for the PIC schemes.

Naive calculations of the interpolations from particles to Fourier modes in equation \eqnref{scatter_pif} and vice versa in equation \eqnref{gather_pif} using discrete Fourier transforms (DFT) cost $\bigO\LRp{N_pN_m}$ which are prohibitively expensive except when only a small number of particles and Fourier modes are considered. Hence, the key step as introduced in \cite{mitchell2019efficient} that makes PIF schemes practical is to NUFFTs of type 1 and 2 \cite{dutt1993fast,dutt1995fast,potts2001fast,barnett2019parallel} to compute equations
\eqnref{scatter_pif} and \eqnref{gather_pif}, respectively. This reduces the complexity of PIF schemes to $\mc{O}\LRp{\LRp{|log\varepsilon|+1}^dN_p+N_mlogN_m}$ comparable to PIC 
schemes, although with a much bigger constant in front of $N_p$. The constant depends on the tolerance $\varepsilon$ chosen for the NUFFT as unlike uniform FFT NUFFT is an approximate algorithm.

Compared to PIC schemes, PIF schemes with NUFFT are relatively more global in nature. 
%This is because of the width of some of the best 
%possible window functions for NUFFT which scale as $w=|log\varepsilon|+1$. For example, if we 
%take a tolerance of $10^{-6}$ for NUFFT, we need $7$ halo layers. If we want to achieve double precision accuracy with $\varepsilon=10^{-16}$ then this would
%translate to $17$ halo layers. This is much higher than the
%typical $1$ or $2$ layers of halo cells used in PIC schemes with commonly used B-spline shape functions. 
%Hence the field communication would be a lot more expensive in PIF schemes.
Therefore, particle 
decomposition, where only the particles are split between the MPI ranks, and not the modes, is suggested for spatial parallelization in \cite{briguglio2000parallelization,mitchell2019efficient}. However, it poses a bottleneck in scaling when a large number of Fourier modes are required for the simulation.

When scaling in the spatial direction stops, in order to further decrease the
time to solution and to scale the simulation, we can try to exploit parallelism in the time integration. In the following 
we describe the parareal algorithm and perform time parallelization of PIF schemes with NUDFT or NUFFT based on this idea. 

\section{Parareal for PIF}
\seclab{parareal}

The most widely investigated PinT algorithm, parareal \cite{lions2001resolution}, is an iterative scheme that can be thought of
as a predictor-corrector approach. The initial time domain is split into multiple intervals and a cheap (possibly inaccurate) predictor called ``coarse propagator'' ($G$) is run serially to give initial guesses for an accurate and expensive corrector called ``fine propagator'' ($F$). With these
guesses, the fine propagator is run in parallel in each of these intervals. The parareal correction step is performed at the end of each iteration as follows
\begin{align}
    U_0^{k+1} &:= u ^0, \\
\eqnlab{parareal_correction}
    U_{n+1}^{k+1} &:=
    F(T_{n+1}, T_n, U_n^k)
    + G(T_{n+1}, T_n, U_n^{k+1})
    - G(T_{n+1}, T_n, U_n^{k}),
\end{align}
where, $[T_n,T_{n+1}]$ are the time intervals, $\Delta T$ is the size of the time subdomain, $U_n^{k}$ is the approximation of the solution at $T_n$ in the $k^{\text{th}}$ parareal iteration, and $u^0$ is the initial condition. The iterations are performed until convergence to a specific tolerance. In order for the
parareal algorithm to give practical speedups over the sequential time integration, the following two criteria must be met:
\begin{itemize}
     \item The cost of the coarse propagator together with the communications costs is much less than that of the fine propagator.
     \item The number of parareal iterations needed for convergence is much smaller than the number of time subdomains. 
\end{itemize}
Typically, the coarse propagators for parareal are created by coarsening the time step size and/or the spatial mesh size. 
However, as mentioned in the Introduction, this is either not possible or will not be effective in the context of PIF schemes. 
% Hence we propose novel coarse propagators for the parareal algorithm applied to PIF schemes in the next section. 
%The PIC schemes usually employ an explicit time discretization for the particle evolution equations and they are run with a time step size
%at the limit of the stability. Hence, further coarsening of the time
%step size for the coarse propagator leads to instability. With respect
%to spatial coarsening, it is not clear how to coarsen the particles and interpolate back and forth between the fine and coarse propagators. Because of these difficulties PIC schemes so far have not been able to exploit the time parallelization.

Instead, given a PIF scheme with NUFFT of tolerance $\varepsilon_f$ as a fine propagator we use either PIF scheme with a coarse NUFFT tolerance $\varepsilon_g > \varepsilon_f$ or the standard PIC scheme as the 
coarse propagator in the parareal algorithm. Both PIF and PIC coarse propagators may or may not employ time coarsening depending on the fine time step size and the stability of the time integrators. 
\emph{The parareal correction equation \eqnref{parareal_correction} is performed in the phase-space on both the positions and velocities of the particles, i.e., on the vector $\ub=\LRc{\xb,\vb}$.} In the next section we state and prove error bounds for the parareal algorithm with PIC and PIF as coarse propagators.   

\section{Theoretical error analysis}
\seclab{theory_convergence}
\subsection{Matrix-vector formulation of PIF}
%\begin{align}
%\eqnlab{posPIF}
%\DD{\xb}{t} &= \vb, \\
%\eqnlab{velPIF}
%    \DD{\vb}{t} &= \frac{q_e}{m_e}\LRp{\Eb + \vb\times\Bb_{ext}}, %\quad j=1,\ldots,N_p.
%\end{align}
%where the self consistent electric field $\Eb$ comes from charge density as
%\[
%     \Eb = -\nabla \phi, \quad -\Delta \phi = \rho.
%\]
Let us denote by $\Pb$ the linear map (from particles to Fourier space) for the exact PIF scheme\footnote{Exact here refers to the use of NUDFT instead of NUFFT. However, the PIF scheme still only uses a finite number of Fourier modes and particles.} with NUDFT which interpolates the charge $q_e$ 
from the particles to the density in the Fourier space $\rhok$. The size of the matrix $\Pb$ is $N_m \times N_p$ and its entries are
\begin{equation}
\eqnlab{ppif}
    \Pb_{lj} = \sum_{p=0}^{N_m-1}\frac{1}{\sqrt{L^3}}\LRp{\Sbk}_{lp}\exp\LRp{{-i\kb_p\cdot\xb_j}}, 
\end{equation}
where $\Sbk$ is a diagonal shape function matrix of size $N_m \times N_m$ in the Fourier space. We denote by $\PbH$ the conjugate transpose of the matrix $\Pb$ and this adjoint matrix interpolates quantities from the Fourier space to the particle locations. 

Now with the charge density in the Fourier space $\rhok$ the electric field is given by $\Ebk = -\frac{i\kb}{|\kb|^2} \rhok$.
If we denote by $\Lb$ a diagonal matrix of size $N_m \times N_m$ with entries $\Lb_{jj} = -\frac{i\kb_j}{|\kb_j|^2}$ then 
equation \eqnref{velposPIF} can be written as
\begin{align}
\eqnlab{posPIFmatrix}
\DD{\xb}{t} &= \vb, \\
\eqnlab{velPIFmatrix}
    \DD{\vb}{t} &= \frac{q_e}{m_e}\LRp{\PbH\Lb\Pb\qb_e + \vb\times\Bb_{ext}}, %\quad j=1,\ldots,N_p.
\end{align}
where $\xb,\vb,$ denote the vectors with the positions and velocities of all the particles and $\qb_e = q_e\cdot\mathbf{1}$ is a vector of size $N_p$. %For simplicity we have assumed the charges and masses of all the particles to be same as $q_e$ and $m_e$ respectively.

\subsection{Bound for a coarse propagator}
We construct coarse propagators by approximating $\Pb$ and $\PbH$ by $\Pbh$ and $\PbhH$. 
Since the equations for the coarse propagator differ only by the term $\PbhH\Lb\Pbh$ instead of $\PbH\Lb\Pb$ in equations \eqnref{posPIFmatrix} and \eqnref{velPIFmatrix}, we bound $\norm{\PbH\Lb\Pb-\PbhH\Lb\Pbh}$ in the following lemma. Since the analysis works in any norm we do not specify a particular one throughout
the theoretical study. In the numerical results in Section \secref{numerical} we use the $L^2$ norm in phase-space and $L^{\infty}$ norm 
in time. 

\begin{lemma}
    \lemlab{rhs_bound}
    The difference between right hand side term of the exact NUDFT PIF scheme $\PbH\Lb\Pb$ and the coarse propagator $\PbhH\Lb\Pbh$ satisfies
    \begin{equation}
        \eqnlab{rhs_bound}
        \norm{\PbH\Lb\Pb-\PbhH\Lb\Pbh} \le \norm{\Lb} \norm{\Pb-\Pbh} \LRp{\norm{\Pb} + \norm{\Pbh}}.
    \end{equation}
\end{lemma}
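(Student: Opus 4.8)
The plan is to reduce the estimate to the standard bound for a difference of operator products by a single add-and-subtract step. The two operators $\PbH\Lb\Pb$ and $\PbhH\Lb\Pbh$ differ in all three factors, so I would first telescope the difference so that each resulting term isolates exactly one occurrence of $\Pb-\Pbh$. Concretely, inserting and cancelling the common middle term $\PbH\Lb\Pbh$ gives
\begin{equation}
\PbH\Lb\Pb-\PbhH\Lb\Pbh = \PbH\Lb\LRp{\Pb-\Pbh} + \LRp{\PbH-\PbhH}\Lb\Pbh.
\end{equation}
(An equally valid alternative is to cancel $\PbhH\Lb\Pb$ instead, which leads to the same final bound.)

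Next I would apply the triangle inequality to the two summands and submultiplicativity of the norm to each product, obtaining
\begin{equation}
\norm{\PbH\Lb\Pb-\PbhH\Lb\Pbh} \le \norm{\PbH}\,\norm{\Lb}\,\norm{\Pb-\Pbh} + \norm{\PbH-\PbhH}\,\norm{\Lb}\,\norm{\Pbh}.
\end{equation}
To collapse the two terms into the claimed form I would use invariance of the norm under conjugate transpose, i.e. $\norm{\PbH}=\norm{\Pb}$ and $\norm{\PbH-\PbhH}=\norm{\LRp{\Pb-\Pbh}^H}=\norm{\Pb-\Pbh}$. Factoring out the common $\norm{\Lb}$ and $\norm{\Pb-\Pbh}$ then yields exactly \eqnref{rhs_bound}.

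There is no deep obstacle here; the argument is purely algebraic and routine. The one point that genuinely requires care is the identity $\norm{A^H}=\norm{A}$, since it is precisely what lets the $\Pb$- and $\Pbh$-factors recombine into $\norm{\Pb}+\norm{\Pbh}$. This holds for the spectral (operator $L^2$) norm used later in Section \secref{numerical} and for the Frobenius norm, but it is \emph{not} automatic for an arbitrary submultiplicative matrix norm. Hence, despite the paper's remark that the analysis works in any norm, to be fully rigorous one should either adopt conjugate-transpose invariance as a standing assumption or simply read the statement in the $L^2$ phase-space norm actually employed in the numerics; with that understood, the bound follows immediately from the two displays above.
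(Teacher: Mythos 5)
Your proof is correct and follows essentially the same route as the paper's: the identical splitting $\PbH\Lb\Pb-\PbhH\Lb\Pbh = \PbH\Lb\LRp{\Pb-\Pbh} + \LRp{\PbH-\PbhH}\Lb\Pbh$, triangle inequality, submultiplicativity, and invariance of the norm under conjugate transposition. Your added caveat that $\norm{A^H}=\norm{A}$ is not automatic for an arbitrary submultiplicative norm is a fair and worthwhile refinement of the paper's ``any norm'' remark, but it does not change the argument.
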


\begin{proof}
    \begin{align*}
        \norm{\PbH\Lb\Pb - \PbhH\Lb\Pbh} %&= \norm{\PbH\Lb\Pb - \PbH\Lb\Pbh + \PbH\Lb\Pbh - \PbhH\Lb\Pbh},\\
        %&= \norm{\PbH\Lb\LRp{\Pb - \Pbh} + \LRp{\PbH - \PbhH}\Lb\Pbh},\\
        &\le \norm{\PbH\Lb\LRp{\Pb - \Pbh}} + \norm{\LRp{\PbH - \PbhH}\Lb\Pbh},\\
        &\le \norm{\PbH}\norm{\Lb}\norm{\Pb - \Pbh} + \norm{\PbH - \PbhH}\norm{\Lb}\norm{\Pbh},\\
        &\le \norm{\Lb} \norm{\Pb-\Pbh} \LRp{\norm{\Pb} + \norm{\Pbh}}.
    \end{align*}
    In the above proof we have used the triangle inequality as well as the equality of norms of a matrix and its adjoint (conjugate transpose). 
\end{proof}

\subsection{PIC and NUFFT PIF as coarse propagators}
If we consider the PIC scheme with FFT-based field solver as the coarse propagator then the entries of matrix $\Pbh$ are given by
\begin{equation}
\eqnlab{ppic}
    \LRp{\Pbh}_{lj} = \sum_{p=0}^{N_g-1}\frac{1}{\sqrt{N^3}}\exp\LRp{{-i\tilde{\kb}_l\cdot\nb_p}}\Sb_{pj}, 
\end{equation}
where for simplicity we take $N$ number of grid points in $x$, $y$ and $z$ directions, $N_g=N^3$ is the total number of grid points, $\tilde{\kb} = \LRc{\frac{2\pi}{N}\LRs{0,N-1}}^3$. Also, $\nb=\LRc{\LRs{0,N-1}}^3$ and $\Sb_{pj}$ is a shape function matrix of size $N_g \times N_p$ in the real space and interpolates charges of the particles to charge density $\rho$ in the real space. In equation \eqnref{ppic} we have used the unitary representation of the Fourier transform. Comparing equations \eqnref{ppic} and \eqnref{ppif} we make the following observation: the PIC scheme first interpolates the charges onto a grid using shape function $\Sb$ in real space and then performs a 
uniform Fourier transform of the charge density to get the charge density in the 
Fourier space $\rhok$. On the other hand the PIF scheme directly 
interpolates the charges onto the Fourier space using non-uniform 
Fourier transform and then multiples it with the analytic Fourier 
transform of the shape function.

Next, if we use the PIF scheme with NUFFT of tolerance $\varepsilon$ instead of NUDFT as the coarse propagator, then the matrix $\Pbh$ is given by the NUFFT matrix of type 1
and $\PbhH$ by the NUFFT matrix of type 2 \cite{barnett2019parallel}. In addition, with either of these coarse propagators (PIC or NUFFT PIF with tolerance $\varepsilon$) we also use a time discretization of 
local truncation error $p+1$ as opposed to the exact time integration in equations \eqnref{posPIFmatrix} and \eqnref{velPIFmatrix}. Let us denote by $\ub=\LRc{\xb,\vb}$ the solution obtained 
by solving PIF equations \eqnref{posPIFmatrix} and \eqnref{velPIFmatrix} with exact time integration and by $\eb_{n+1}^k=\LRc{\norm{\xb_{n+1}^k - \xb_{n+1}},\norm{\vb_{n+1}^k - \vb_{n+1}}}$ the error at the $k^{th}$ iteration of parareal on time subdomain $\Omega_n$ with respect to the exact PIF solution $\ub_{n+1}$. Now we state and prove the error bounds for the parareal algorithm with PIC or NUFFT PIF as coarse propagators in the following theorems.  

\subsection{Error bound on parareal with PIC as coarse propagator}
\begin{theorem}
\theolab{conv_pic}
    Let $\Fb\LRp{\ub_{n}^k}$ be the NUDFT PIF solution of equations \eqnref{posPIFmatrix}, \eqnref{velPIFmatrix} obtained with the exact time integrator on the time subdomain $\Omega_{n}$, and let $\Gb\LRp{\ub_{n}^k}$ be the approximate solution obtained with the PIC scheme with $\Pbh$ given by equation \eqnref{ppic} and a time integrator of step size $\Delta t_g$ such that $\Delta T=n_g\Delta t_g$ and local truncation error bounded by $\mc{O}\LRp{\Delta t_g^{p+1}}$. Let a B-spline shape
    function of order $m$ be used in equation \eqnref{ppic} and its analytical Fourier transform in equation \eqnref{ppif}. Let $N_p$ be the total number of particles randomly sampled from an initial distribution function, $N_m=N_g$ the total number of modes and grid points in PIF and PIC schemes respectively, $h$ the mesh size and $P_c=N_p/N_g$ the number of particles per mode or cell for the fine and coarse propagators. Assuming that the coarse propagator has a Lipschitz constant of $C_{pic}$, then at iteration $k$ of the parareal algorithm, we have the following bound:
    \begin{equation}
        \eqnlab{conv_pic}
        \eb_{n+1}^k \le \bar{C}^{n-k} \frac{\LRp{C_{grid} h^{\min\LRp{m+1,2}} + C_{noise} P_c^{-0.5} + C_{time} \Delta T \Delta t_g^p}^k}{k!}\prod_{j=1}^{k}(n+1-j) \deltab,  
    \end{equation}
        where $\deltab = \max\limits_{n=1,\ldots,N}\eb_n^0$ and $\eb_n^0$ is the initial error. Here, $\bar{C}=\max(1,C_{pic})$, and $C_{grid}$, $C_{noise}$ are constants related to the density distribution and the shape function used in the PIC scheme. $C_{time}$ is a constant which depends on the time integrator and the smoothness of the distribution with respect to time. 
\end{theorem}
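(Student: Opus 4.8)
The plan is to run the standard superlinear parareal convergence argument (in the spirit of Gander--Vandewalle), specialized to the present setting where $\Fb$ and $\Gb$ solve the same characteristic ODEs \eqnref{posPIFmatrix}--\eqnref{velPIFmatrix} and differ only through the operator $\PbH\Lb\Pb$ versus $\PbhH\Lb\Pbh$ and through the time integrator. First I set up the error recurrence on the phase-space vector $\ub=\LRc{\xb,\vb}$. Since the fine propagator reproduces the exact PIF solution, $\ub_{n+1}=\Fb(\ub_n)$; subtracting this from the correction \eqnref{parareal_correction} and inserting $\pm\Gb(\ub_n)$ yields
\begin{equation*}
\ub_{n+1}^{k+1}-\ub_{n+1} = \LRs{\Fb(\ub_n^k)-\Gb(\ub_n^k)}-\LRs{\Fb(\ub_n)-\Gb(\ub_n)} + \LRs{\Gb(\ub_n^{k+1})-\Gb(\ub_n)}.
\end{equation*}
The last bracket is controlled by the Lipschitz continuity of the coarse propagator with constant $C_{pic}$, while the first two brackets measure how much $\Fb-\Gb$ stretches $\ub_n^k-\ub_n$. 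Passing to the componentwise position/velocity errors $\eb$ and assuming $\Fb-\Gb$ is Lipschitz over a subdomain with a small constant $\gamma$ gives the two-term recurrence
\begin{equation*}
\eb_{n+1}^{k+1} \le \gamma\,\eb_n^{k} + C_{pic}\,\eb_n^{k+1}.
\end{equation*}

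The central step is to identify $\gamma$ with the bracketed factor of \eqnref{conv_pic}. Splitting the defect into a spatial (operator) part and a temporal part, the spatial part is handled by Lemma \lemref{rhs_bound}, which reduces it to bounding $\norm{\Pb-\Pbh}$. I decompose $\norm{\Pb-\Pbh}$ into a deterministic contribution from depositing charge onto a grid with a B-spline shape function of order $m$, whose interpolation/aliasing error scales like $h^{\min(m+1,2)}$ and yields $C_{grid}h^{\min(m+1,2)}$, and a stochastic contribution from the randomly sampled particle positions, whose Monte-Carlo fluctuations scale like the inverse square root of the number of particles per cell and yield $C_{noise}P_c^{-0.5}$. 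For the temporal part, a coarse integrator with local truncation error $\mc{O}(\Delta t_g^{p+1})$ accumulates over the $n_g=\Delta T/\Delta t_g$ steps of a subdomain into a global error $\mc{O}(\Delta T\Delta t_g^{p})$, giving $C_{time}\Delta T\Delta t_g^{p}$. Propagating these right-hand-side perturbations through the flow on $[T_n,T_{n+1}]$ by a Gronwall estimate and summing bounds the Lipschitz constant of $\Fb-\Gb$ by $\gamma=C_{grid}h^{\min(m+1,2)}+C_{noise}P_c^{-0.5}+C_{time}\Delta T\Delta t_g^{p}$.

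Finally I solve the recurrence. With the exact initial value $\eb_0^k=0$ for all $k$ and the initial-iterate bound $\eb_n^0\le\deltab$, I prove \eqnref{conv_pic} by induction on $k$ while iterating over $n$. It is convenient to note that $\frac{1}{k!}\prod_{j=1}^k(n+1-j)=\binom{n}{k}$, so the claim reads $\eb_{n+1}^k\le\bar C^{\,n-k}\gamma^k\binom{n}{k}\deltab$ with $\bar C=\max(1,C_{pic})$. The base case $k=0$ is $\eb_{n+1}^0\le\deltab\le\bar C^{\,n}\deltab$, valid because $\bar C\ge1$. For the inductive step I substitute the two hypotheses into the recurrence; the factors of $\gamma$ combine into $\gamma^{k+1}$, and Pascal's identity $\binom{n}{k}+\binom{n}{k+1}=\binom{n+1}{k+1}$ together with $C_{pic}\le\bar C$ collapses the two branches into the single term $\bar C^{\,n-k-1}\gamma^{k+1}\binom{n}{k+1}\deltab$, which is exactly \eqnref{conv_pic}.

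I expect the main obstacle to be the defect bound of the second paragraph, in particular the clean separation of $\norm{\Pb-\Pbh}$ into the deterministic $h^{\min(m+1,2)}$ grid term and the stochastic $P_c^{-0.5}$ noise term: the B-spline deposition error must be disentangled from the sampling fluctuations of the randomly initialised particles (so that the $P_c^{-0.5}$ estimate holds in expectation or with high probability), and this operator-norm bound must then be promoted to a genuine small Lipschitz constant for the nonlinear flow $\Fb-\Gb$ via a Gronwall/variational argument. By comparison, the recurrence set-up and its inductive solution are routine.
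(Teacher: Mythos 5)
Your proposal is correct and follows essentially the same route as the paper: the defect $\norm{\Fb-\Gb}$ is split into a spatial part controlled via Lemma \lemref{rhs_bound} (with $\norm{\Pb-\Pbh}$ decomposed into the $h^{\min(m+1,2)}$ grid term and the $P_c^{-0.5}$ sampling term) plus a temporal part $C_{time}\Delta T\Delta t_g^p$, and this is inserted into the superlinear parareal bound. The only difference is that you re-derive that bound from the two-term error recurrence and Pascal's identity, whereas the paper simply cites the Gander et al.\ result \eqnref{gander_parareal} and plugs in $\alpha=\norm{\Fb-\Gb}$, $\beta=C_{pic}$.
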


\begin{proof}
    We follow \cite{gander2008nonlinear,gander2023unified} where the authors show that if $\alpha=\norm{\Fb-\Gb}$ and $\beta=\norm{\Gb}$ then the convergence of the parareal algorithm for general nonlinear ODEs is given by
    \begin{equation}
        \eqnlab{gander_parareal}
    \eb_{n+1}^k \le \bar{\beta}^{n-k} \frac{\alpha^k}{k!}\prod_{j=1}^{k}(n+1-j) \deltab  
    \end{equation}
    where $\bar{\beta}=\max\LRp{1,\beta}$. Since we have assumed a Lipschitz constant $C_{pic}$ for the coarse propagator we only need to evaluate $\norm{\Fb-\Gb}$ and then the proof follows immediately from equation \eqnref{gander_parareal}.
    If we denote by $\LRp{\Fb-\Gb}_{\xb,\vb}$ and $\LRp{\Fb-\Gb}_{t}$ the differences between $\Fb$ and $\Gb$ in the spatial (phase-space) and temporal dimensions, then  
    \[
        \norm{\Fb-\Gb} = \norm{\LRp{\Fb-\Gb}_{\xb,\vb}+\LRp{\Fb-\Gb}_{t}} \le \norm{\LRp{\Fb-\Gb}_{\xb,\vb}}+\norm{\LRp{\Fb-\Gb}_{t}}.
    \]
    As mentioned before, in the spatial approximation, the fine and coarse propagators differ by $\norm{\PbH\Lb\Pb-\PbhH\Lb\Pbh}$ and from Lemma \lemref{rhs_bound} we can write 
    \begin{align}
        \norm{\LRp{\Fb-\Gb}_{\xb,\vb}} &= \norm{\PbH\Lb\Pb-\PbhH\Lb\Pbh},\\
        &\le \norm{\Lb} \norm{\Pb-\Pbh} \LRp{\norm{\Pb} + \norm{\Pbh}},\\
        \eqnlab{space_bound_1}
        &\le \gamma \norm{\Pb-\Pbh},
    \end{align}
    where $\gamma$ is a constant which can be chosen independently of the grid resolution and the total number of particles. Equation \eqnref{space_bound_1} follows from the fact that the norms $\norm{\Lb}$, $\norm{\Pb}$ and $\norm{\Pbh}$ can be bounded independent of the grid size and the total number of particles.

    Now the difference between the ideal linear map and the approximate linear map from PIC has two components of error: the grid-based error and the statistical noise due to the particles. If we use a B-spline shape function of order $m$ then it has a grid-based error of $\mc{O}(h^{\min\LRp{m+1,2}})$ where $h$ is the mesh size \cite{birdsall2004plasma,ricketson2016sparse,muralikrishnan2021sparse}. The error due to the particle noise scales as $\mc{O}(P_c^{-0.5})$ where $P_c = N_p/N_g$ is the
    number of particles per cell in the PIC scheme \cite{birdsall2004plasma,ricketson2016sparse,muralikrishnan2021sparse}. Plugging these two error terms in equation \eqnref{space_bound_1} we get
    \begin{align}
        \norm{\LRp{\Fb-\Gb}_{\xb,\vb}} &\le \gamma \LRp{\tilde{C}_{grid} h^{\min\LRp{m+1,2}} + \tilde{C}_{noise} P_c^{-0.5}},\\
        \eqnlab{space_bound_2}
        &\le C_{grid} h^{\min\LRp{m+1,2}} + C_{noise} P_c^{-0.5},
    \end{align}
    where $\tilde{C}_{grid}$ is a constant related to the grid-based error and it depends on the norms of the configuration space ($\xb$) derivatives of the density and the order of the B-spline function \cite{ricketson2016sparse,muralikrishnan2021sparse}. $\tilde{C}_{noise}$ is a constant related to the statistical noise which in turn depends on the order of the B-spline function and the norm of the density \cite{ricketson2016sparse,muralikrishnan2021sparse}. The constants $C_{grid} = \gamma\tilde{C}_{grid}$ and $C_{noise} = \gamma\tilde{C}_{noise}$.

    Now since the time integrator has a local truncation error of $\mc{O}(\Delta t_g^{p+1})$ and we perform $n_g=\Delta T/\Delta t_g$ steps of it to reach the end of the time subdomain $\Omega_n$, we can write $\norm{\LRp{\Fb-\Gb}_{t}} = C_{time} \Delta T \Delta t_g^p$ where $C_{time}$ is a constant which depends on the norms of the derivatives of the distribution with respect to time and the choice of the time integrator. Combining the spatial and temporal approximation bounds, we get

    \[
        \norm{\Fb-\Gb} \le C_{grid} h^{\min\LRp{m+1,2}} + C_{noise} P_c^{-0.5} + C_{time} \Delta T \Delta t_g^p,
    \]
    which together with the Lipschitz constant $C_{pic}$ of the coarse propagator yields the desired result when plugged into equation \eqnref{gander_parareal}.
\end{proof}

\subsection{Error bound on parareal with approximate PIF as coarse propagator}
\begin{theorem}
\theolab{conv_pif}
    Let $\Fb\LRp{\ub_{n}^k}$ be the NUDFT PIF solution of equations \eqnref{posPIFmatrix}, \eqnref{velPIFmatrix} obtained with the exact time integrator on the time subdomain $\Omega_{n}$, and let $\Gb\LRp{\ub_{n}^k}$ be the approximate solution obtained with the PIF scheme using NUFFT of tolerance $\varepsilon$ and a time integrator of step size $\Delta t_g$  such that $\Delta T=n_g\Delta t_g$ and the local truncation error bounded by $\mc{O}\LRp{\Delta t_g^{p+1}}$. Let the total number of
    particles $N_p$, modes $N_m$ and the shape function be same for both the fine and coarse propagators. Assuming that the coarse propagator has a Lipschitz constant of $C_{pif}$, then at iteration $k$ of the parareal algorithm, we have the following bound:
    \begin{equation}
        \eqnlab{conv_pif}
        \eb_{n+1}^k \le \bar{C}^{n-k} \frac{\LRp{C_{nufft} \varepsilon + C_{time} \Delta T \Delta t_g^p}^k}{k!}\prod_{j=1}^{k}(n+1-j) \deltab  
    \end{equation}
        where $\deltab = \max\limits_{n=1,\ldots,N}\eb_n^0$ and $\eb_n^0$ is the initial error. Here, $\bar{C}=\max(1,C_{pif})$, and $C_{nufft}$ is a constant related to the NUFFT scheme. $C_{time}$ is a constant which depends on the time integrator and the smoothness of the distribution with respect to time. 
\end{theorem}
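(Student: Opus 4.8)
The plan is to mirror the proof of Theorem~\theoref{conv_pic} almost verbatim, since the only structural difference from the PIC case lies in how the coarse propagator approximates the fine one in the spatial (phase-space) variables. I would again invoke the nonlinear parareal convergence estimate of \cite{gander2008nonlinear,gander2023unified}, equation~\eqnref{gander_parareal}, with $\alpha = \norm{\Fb - \Gb}$ and $\beta = \norm{\Gb}$. Because the coarse propagator is assumed Lipschitz with constant $C_{pif}$, the factor $\bar{\beta}$ is replaced by $\bar{C} = \max(1, C_{pif})$, so the entire task reduces to estimating $\alpha = \norm{\Fb - \Gb}$ and substituting it into~\eqnref{gander_parareal}.

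First I would split the discrepancy into its spatial and temporal contributions, $\norm{\Fb-\Gb} \le \norm{\LRp{\Fb-\Gb}_{\xb,\vb}} + \norm{\LRp{\Fb-\Gb}_{t}}$, exactly as in the PIC case. The temporal term is identical to before: both propagators employ a time integrator with local truncation error $\bigO\LRp{\Delta t_g^{p+1}}$ over $n_g = \Delta T/\Delta t_g$ steps, so $\norm{\LRp{\Fb-\Gb}_{t}} = C_{time}\Delta T\Delta t_g^p$. The spatial term again equals $\norm{\PbH\Lb\Pb - \PbhH\Lb\Pbh}$, so Lemma~\lemref{rhs_bound} bounds it by $\gamma\norm{\Pb-\Pbh}$ with $\gamma$ independent of the resolution and particle count, just as in equation~\eqnref{space_bound_1}. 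A point worth stressing here is that, by hypothesis, $N_p$, $N_m$ and the shape function are shared by both propagators, so there is neither a grid-based nor a particle-noise contribution to $\norm{\Pb-\Pbh}$; only the NUFFT approximation error survives, which is precisely why the bound~\eqnref{conv_pif} is cleaner than~\eqnref{conv_pic}.

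The key — and only genuinely new — step is bounding $\norm{\Pb - \Pbh}$, where $\Pb$ is now the exact NUDFT and $\Pbh$ is the type-1 NUFFT of tolerance $\varepsilon$, with $\PbhH$ the corresponding type-2 transform. Here I would appeal directly to the accuracy guarantee of the NUFFT algorithm \cite{barnett2019parallel}: a NUFFT run at tolerance $\varepsilon$ reproduces the exact nonuniform transform up to a relative error of order $\varepsilon$, and the same holds for the adjoint (type-2) map, so that $\norm{\Pb - \Pbh} \le C_{nufft}\varepsilon$, where $C_{nufft}$ absorbs the bounded norms of the transforms together with the spreading/gridding constants of the algorithm. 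This is the main obstacle in the sense that it is the single place where the nature of the coarse propagator enters differently from Theorem~\theoref{conv_pic}; everything else is bookkeeping. Note that the adjoint norm equality already used in the proof of Lemma~\lemref{rhs_bound} lets a single estimate on $\norm{\Pb-\Pbh}$ simultaneously control $\norm{\PbH-\PbhH}$.

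Combining the two pieces gives $\norm{\Fb - \Gb} \le C_{nufft}\varepsilon + C_{time}\Delta T\Delta t_g^p$. Replacing $\alpha$ by this bound and $\bar{\beta}$ by $\bar{C}$ in equation~\eqnref{gander_parareal} then yields the claimed estimate~\eqnref{conv_pif}, completing the argument.
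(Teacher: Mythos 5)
Your proposal is correct and follows essentially the same route as the paper: both reduce the problem to the parareal estimate of Gander et al., reuse the spatial/temporal splitting and Lemma~\lemref{rhs_bound} from the PIC case, and observe that the only new ingredient is $\norm{\Pb-\Pbh}=\mc{O}(\varepsilon)$ from the NUFFT accuracy guarantee. The paper's own proof is just a terser version of exactly this argument.
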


\begin{proof}
    The proof is very similar to that of Theorem \theoref{conv_pic}. The difference comes from the term $\norm{\Pb-\Pbh}$ which when we use a NUFFT of tolerance $\varepsilon$ is $\mc{O}(\varepsilon)$ \cite{barnett2019parallel,barnett2021aliasing}. Combining this with the bound on the temporal error we get   
    \[
        \norm{\Fb-\Gb} \le C_{nufft} \varepsilon + C_{time} \Delta T t_g^p,
    \]
    where $C_{nufft}$ is a constant which depends on the NUFFT scheme used and $C_{time}$ depends on the norms of the derivatives of the distribution with respect to time and the choice of the time integrator. Putting this together with the Lipschitz constant $C_{pif}$ of the coarse propagator in equation \eqnref{gander_parareal} ends the proof.  
\end{proof}

\begin{remark}
%Even though the theoretical error analysis is for PIF with NUDFT in practice we use PIF with NUFFT of tight tolerance as a fine propagator to reduce computational costs. 
PIF with NUDFT or NUFFT with very small tolerances close to machine precision is structure preserving. When we choose the tolerance of the parareal algorithm also close to machine precision then the solution obtained after convergence is structure preserving. However, if a lenient tolerance is used for the parareal then the solution may not have the same structure preserving properties. We will investigate symmetric parareal algorithms in \cite{dai2013symmetric} as part of our future work since it preserves the structure preserving properties in each iteration.  
\end{remark}

%\subsection{Overview of constants}

\section{Numerical results}
\seclab{numerical}
\subsection{Mini-apps}
We consider the mini-apps with the parameters described in \cite{muralikrishnan2024scaling} for the numerical verification of the theoretical results in Section \secref{theory_convergence} as well as for the parallel scaling study. We briefly describe the test cases and their parameters here for the sake of completeness. %In our implementation we use the non-dimensional form of the Vlasov-Poisson system and we follow the same non-dimensionalization as in \cite{rodriguez2020implementation} (see Table I). In this case, the equations for the PIC and PIF schemes look very similar to the dimensional forms except that in equation \eqnref{velPIF} the factor $q_e/m_e$ becomes $-1$ and $\varepsilon_0$ in equation \eqnref{potential} is taken as $1$. 
The parareal algorithm and the mini-apps are implemented in the performance portable, open-source, C++ library
IPPL \cite{muralikrishnan2024scaling,matthias_frey_2024_10878166} 
interfaced with the FINUFFT library \cite{barnett2019parallel,shih2021cufinufft} for performing the NUFFTs in an efficient manner. All 
the computations are performed on NVIDIA A100 GPUs in the JUWELS Booster supercomputer at the Jülich Supercomputing Centre.
%as the NUFFTs are much faster on 
%the GPUs than on the CPUs as shown in \cite{shih2021cufinufft}.
\subsubsection{Landau damping}
This is one of the classical benchmark problems in plasma physics. We consider the following initial distribution
   \begin{equation}
        f(t=0) = \frac{1}{\LRp{2\pi}^{3/2}} e^{-|\vb|^2/2} \LRp{1+\alpha\cos(w x)} \LRp{1+\alpha\cos(w y)} 
               \LRp{1+\alpha\cos(w z)}
    %\eqnlab{landau_f}
    \end{equation}
in the domain $\LRs{0,L}^3$, where $L = 2\pi/w$ is the length in each dimension. We choose the 
following parameters: $w = 0.5$, $\alpha=0.05$ which correspond to weak Landau damping. The total electron charge based 
on our initial distribution is $Q_e = -L^3$.

\subsubsection{Two-stream instability}
Similar to Landau damping, this is another classical benchmark problem in plasma physics. We consider the following initial distribution of electrons

    \begin{equation}
        f(t=0) = \frac{1}{\sigma^3\LRp{2\pi}^{3/2}}\LRc{0.5e^{-\frac{|\vb-\vb_{b1}|^2}{2\sigma^2}} + 0.5e^{-\frac{|\vb-\vb_{b2}|^2}{2\sigma^2}}} \LRp{1+\alpha\cos(wz)}
    \end{equation}
    in the domain $\LRs{0,L}^3$, where $L = 2\pi/w$ is the length in each dimension. We choose $\sigma=0.1$, $w=0.5$, 
    $\alpha=0.01$, $\vb_{b1}=\LRc{0,0,-\pi/2}$, and $\vb_{b2}=\LRc{0,0,\pi/2}$. The total charge $Q_e$ is chosen in the same 
    way as in the Landau damping example.

\subsubsection{Penning trap}
This mini-app corresponds to the dynamics of electrons in a Penning trap with a neutralizing static ion background. Unlike the Landau damping and two-stream instability test cases this one involves external electric and magnetic fields. The external magnetic field is given by $\B_{ext}=\LRc{0,0,5}$ and the quadrupole external electric field by 
    \begin{equation}
        \eqnlab{penning_ext_efield}
        \Eb_{ext} = \LRp{-\frac{15}{L}\LRp{x-\frac{L}{2}},-\frac{15}{L}\LRp{y-\frac{L}{2}},\frac{30}{L}\LRp{z-\frac{L}{2}}},
    \end{equation}
where the domain is $\LRs{0,L}^3$ and $L=25$. For the initial conditions, we sample the phase-space using a Gaussian
    distribution in all the variables. The mean and standard deviation for
    all the velocity components are $0$ and $1$, respectively. While the mean
    for all the configuration space variables is $L/2$, the standard 
    deviations are $2$, $1$ and $3$ for $x$, $y$, and $z$, respectively. The total electron charge is 
    $Q_e=-1562.5$. 

\subsection{Verification of theoretical estimates}
We verify the theoretical error scalings in Section \secref{theory_convergence} using the mini-apps described in the previous section. The particles are randomly sampled from the initial distribution functions by the inverse transform sampling 
technique as in \cite{muralikrishnan2024scaling}. For the time integration in both the coarse and fine propagators, we use the kick-drift-kick form of 
the velocity Verlet or Boris scheme as in \cite{tretiak2019arbitrary}, and this gives both the positions and velocities of the particles at 
integer time steps. The final time $T=19.2$ is chosen for all tests
except for the bottom row of Figure \figref{conv_dt_pic_penning} in which we take $T=1.2$ for reasons that will be explained later. We use 16 time subdomains or GPUs to parallelize in the time direction whereas in the spatial 
direction either 16 GPUs or 4 GPUs are used depending on the problem size. A linear B-spline (cloud-in-cell) shape function
is used for the PIC scheme and the analytical Fourier transform of it is used for the PIF scheme. 
In order to reduce the computational costs, instead of PIF with NUDFT as a fine propagator 
we choose NUFFT PIF with a tight tolerance of $10^{-12}$ for Figures \figref{conv_pc}, \figref{conv_h}, \figref{conv_dt_pic_penning}, \figref{conv_dt_pic_pif_landau} (top row) and  \figref{conv_epsilon}. For the verification of convergence with respect to the coarse time step size in the bottom row of Figure \figref{conv_dt_pic_pif_landau}
we choose the fine and coarse NUFFT tolerances as $10^{-6}$ to eliminate the spatial component of error. Similarly, to eliminate the temporal component of error for Figures \figref{conv_pc}, \figref{conv_h}, and \figref{conv_epsilon},
we select the same time step size of $0.05$ for both the fine and the coarse propagators. For parareal we choose the stopping criterion based on 
\begin{equation}
    \eqnlab{stop_criteria}
    \frac{\norm{\Gb\LRp{\xb_{n}^{k+1}} - \Gb\LRp{\xb_{n}^{k}}}_2}{\norm{\Gb\LRp{\xb_{n}^{k+1}}}_2} \leq \epsilon \quad \text{and} \quad \frac{\norm{\Gb\LRp{\vb_{n}^{k+1}} - \Gb\LRp{\vb_{n}^{k}}}_2}{\norm{\Gb\LRp{\vb_{n}^{k+1}}}_2} \leq \epsilon
\end{equation}
for time subdomain $\Omega_{n}$, where $\epsilon=10^{-11}$ is chosen as the stopping tolerance. Finally, the $L^{\infty}$ norm of the error across the time subdomains is shown in the figures. 
Now this is not the same as $\eb_{n+1}^k$ in Theorems \theoref{conv_pic} and \theoref{conv_pif} where the error is between the exact solution (or serial fine propagator) and the parareal solution. However, the stopping 
criterion \eqnref{stop_criteria} is one of the practical criteria used in the simulations and since the error $\eb_{n+1}^k$ differs from it only by an additional Lipschitz constant of the fine propagator
we can still expect it to follow the same scalings as in Theorems \theoref{conv_pic} and \theoref{conv_pif}.

\begin{figure}[h!b!t!]
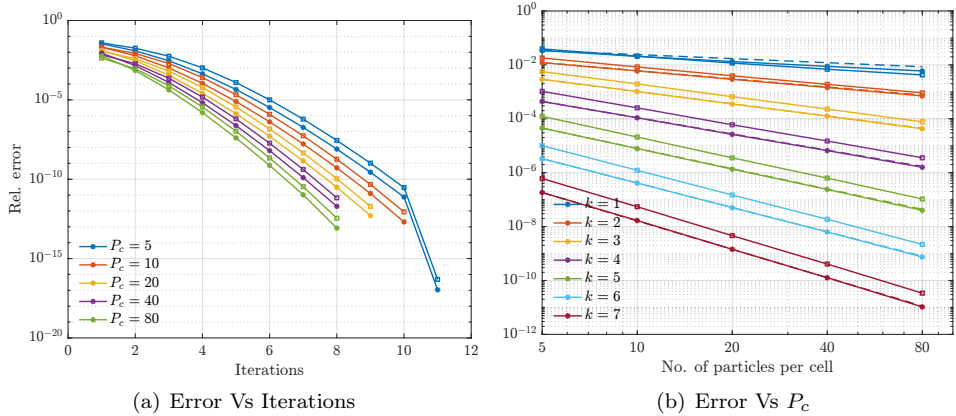

    \subfigure[Error Vs Iterations]{
    \includegraphics[width=0.48\columnwidth]{LandauDamping_MaxlocalError_Vs_Iter_32_cube_all_Pc.pdf}
  }
  \subfigure[Error Vs $P_c$]{
    \includegraphics[width=0.46\columnwidth]{LandauDamping_MaxlocalError_Vs_Pc_32_cube_all_Iter.pdf}
    \figlab{conv_pc_right}
  }
    \caption{Landau damping: Relative error versus the number of particles per cell $P_c$ and iterations $k$ with PIC as coarse propagator. The parameters are $N_m=32^3$, $T=19.2$ and $\Delta t_f = \Delta t_g = 0.05$. The fine PIF propagator has the same number of particles per mode as the number of particles per cell in the coarse propagator. The dashed lines in the right figure represent the theoretical scaling $\mc{O}\LRp{P_c^{-0.5k}}$ obtained from Theorem \theoref{conv_pic}. The square
    and asterisk markers represent the relative errors in velocity $\vb$ and position $\xb$ respectively.}   
\figlab{conv_pc}
\end{figure}

For verification, we take one component of error at a time and select the parameters such that the other error components are either zero or minimized. For example, while studying the effect of the number of particles per cell on the convergence of the algorithm we take the coarse and fine time step sizes to be same so that time error component is zero. Also, we  use a sufficiently fine grid so that the grid-based error is small.
We show representative results from one of the mini-apps and comment on the other cases for brevity. 

\subsubsection{Convergence with particles per cell}
First we consider PIC as coarse propagator and verify the error scaling with respect to 
the particles per cell $P_c$ in Figure \figref{conv_pc}. We consider the Landau damping mini-app with $32^3$ modes in PIF for the fine propagator and the same number of grid cells in PIC for the coarse propagator. Since the error is dominated by
statistical noise in this test case, the number of modes/grid cells is sufficient to minimize the grid-based error term in equation \eqnref{conv_pic}. The temporal component of the error is zero, since we select the same time step size of $0.05$ for both the coarse and the fine propagators. 

We plot the decrease in error with iterations $k$ for different numbers of particles per cell $P_c$ and the error versus $P_c$ for different $k$ in the left and right columns of Figure \figref{conv_pc}. In general, it is difficult to evaluate the different constants in Theorems \theoref{conv_pic} and \theoref{conv_pif} from the numerical results. This is the reason we verify the theoretical scalings only in the right columns of all the figures, since the constants only affect the intercepts but
not the slopes in them. However, with ad-hoc constants we were able to verify the superlinear convergence of the error with respect to iterations as given by Theorems \theoref{conv_pic} and \theoref{conv_pif} for all the left columns of figures. From Figure \figref{conv_pc_right} we see good agreement of the numerical results with the theoretical scaling. We observed similar convergence results for the two-stream instability as well as the Penning trap test cases. However, since the
initial distribution is Gaussian in the case of the Penning trap, the total error is not dominated by statistical noise for the values of particles per cell tested in Figure \figref{conv_pc}. Hence, we needed to go to lower numbers of total particles in order to observe convergence there. We note here that the particles are randomly sampled and we do not employ any noise reduction techniques such as quasi Monte Carlo methods as Theorem \theoref{conv_pic} does not cover them. But, in practice,  such techniques can help to achieve
faster convergence in the parareal algorithm if the total error is dominated by noise in the test case.% Similar to Figure \figref{conv_pc} we observed that in all the figures the velocity error curves follow the position error curves but had a slightly higher value %We also observe the superlinear convergence of the error with respect to iterations from the left column as dictated by Theorem \theoref{conv_pic}.

\begin{figure}[h!b!t!]
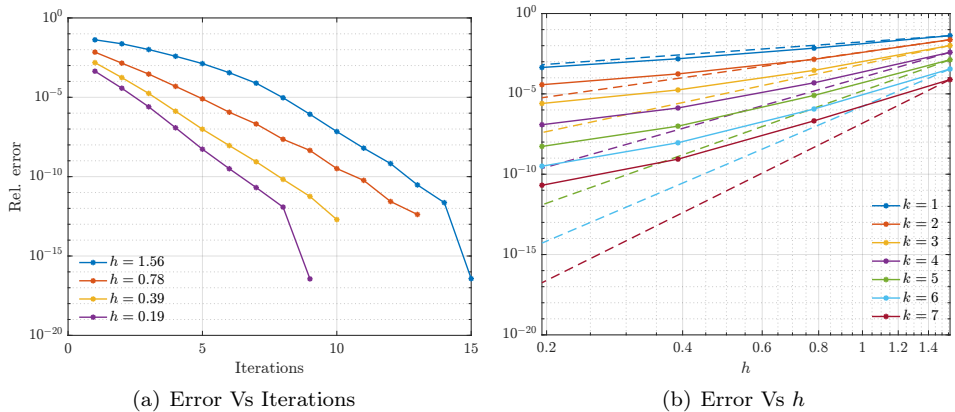

    \subfigure[Error Vs Iterations]{
    \includegraphics[width=0.48\columnwidth]{PenningTrap_MaxlocalError_Vs_Iter_all_grid_Pc_10.pdf}
  }
  \subfigure[Error Vs $h$]{
    \includegraphics[width=0.46\columnwidth]{PenningTrap_MaxlocalError_Vs_grid_points_Pc_10.pdf}
    \figlab{conv_h_right}
  }
    \caption{Penning trap: Relative error in position $\xb$ versus the mesh size $h$ and iterations $k$ with PIC as the coarse propagator. The parameters are $P_c=10$, $T=19.2$ and $\Delta t_f = \Delta t_g = 0.05$. The fine PIF propagator has $16^3$, $32^3$, $64^3$ and $128^3$ modes corresponding to the mesh sizes in the coarse propagator. The dashed lines in the right figure represent the theoretical scaling $\mc{O}\LRp{h^{2k}}$ obtained from Theorem \theoref{conv_pic}.}   
\figlab{conv_h}
\end{figure}

\subsubsection{Convergence with mesh size}
In Figure \figref{conv_h} we verify the convergence with respect to mesh size $h$ according to Theorem \theoref{conv_pic}. Since the cloud-in-cell shape function is second order ($m=1$), we expect a theoretical convergence of $\mc{O}\LRp{h^{2k}}$ from Theorem \theoref{conv_pic}.
We consider the Penning trap mini-app with ten particles per mode/cell as it has the least statistical noise out of the three mini-apps for the same number of total particles as explained before. From Figure \figref{conv_h_right}, we observe that the numerical results
follow the theoretical scaling during the initial iterations and/or larger $h$. However, for higher $k$ and/or smaller $h$, they deviate from the expected scaling. This is due to the error from the statistical noise term in Theorem \theoref{conv_pic}, which affects the scaling at lower levels of error. We were also able to verify the convergence with respect to $h$ for the Landau damping and two-stream instability test cases. But, due to the dominant statistical noise term in those
examples we had to use a much higher number of
particles per cell on $\mc{O}(1000)$ so that the grid-based error dominates. Similar to the Penning trap, we also observed in those cases, that the scalings deviate from the theoretical scalings at higher $k$ and/or smaller $h$ due to the noise. Similar to Figure \figref{conv_pc}, we observed that the velocity error curves follow the position error curves with a slightly ($\mc{O}(1)$ factor) higher magnitude. Hence, we do not show them in Figure \figref{conv_h} as well as in the following figures for brevity.     

\begin{figure}[h!b!t!]
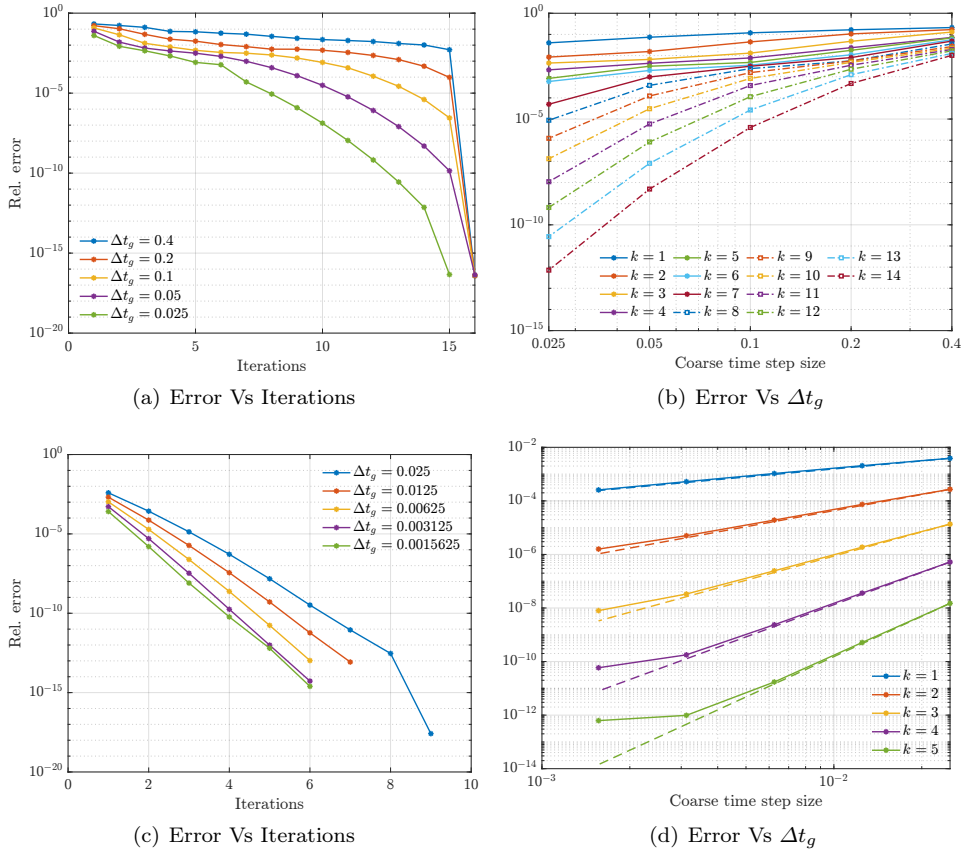

    \subfigure[Error Vs Iterations]{
    \includegraphics[width=0.48\columnwidth]{PenningTrap_MaxlocalError_Vs_Iter_64_cube_Pc_10_all_coarse_dt.pdf}
  }
  \subfigure[Error Vs $\Delta t_g$]{
      \includegraphics[width=0.46\columnwidth]{PenningTrap_MaxlocalError_Vs_coarse_dt_64_cube_Pc_10.pdf}
      \figlab{conv_penning_long_right}
  }
  \subfigure[Error Vs Iterations]{
    \includegraphics[width=0.48\columnwidth]{PenningTrap_MaxlocalError_Vs_Iter_64_cube_Pc_10_all_coarse_dt_Tend_12.pdf}
  }
  \subfigure[Error Vs $\Delta t_g$]{
      \includegraphics[width=0.46\columnwidth]{PenningTrap_MaxlocalError_Vs_coarse_dt_64_cube_Pc_10_Tend_12_order_1.pdf}
      \figlab{conv_penning_small_right}
  }
    \caption{Penning trap: Relative error in position $\xb$ versus the coarse time step size $\Delta t_g$ and iterations $k$ with PIC as the coarse propagator. The parameters are $N_m=64^3$ and $P_c=10$. The final time $T=19.2$ and the fine time step size $\Delta t_f = 3.125\times10^{-3}$ for the top row whereas it is 16 times smaller, i.e., $T=1.2$ and $\Delta t_f = 1.953125\times10^{-4}$ for the bottom row. The dashed lines in the right column of the bottom row represent the scaling $\mc{O}\LRp{\Delta t_g^{k}}$ which is less than $\mc{O}\LRp{\Delta t_g^{2k}}$ obtained from Theorem \theoref{conv_pic}.}   
\figlab{conv_dt_pic_penning}
\end{figure}

\subsubsection{Convergence with coarse time step size}
\seclab{conv_time_step_size}
We now investigate the convergence with respect to the coarse time step size $\Delta t_g$ using PIC and PIF as coarse propagators. The velocity Verlet or the Boris time integrator that we use is second order, i.e., $p=2$, in Theorems \theoref{conv_pic} and \theoref{conv_pif}. Now, the product term $\prod_{j=1}^{k}(n+1-j)$ can be bounded by $n^k$ and multiplying that term by the term $\LRp{\Delta T \Delta t_g^p}^k$ in Theorems \theoref{conv_pic}, \theoref{conv_pif} gives $\LRp{T \Delta t_g^p}^k$. Thus, we expect a theoretical convergence of $\mc{O}\LRp{\Delta t_g^{2k}}$ for our case.  

We first consider the Penning trap mini-app with $N_m=64^3$, $P_c=10$, $T=19.2$ and $\Delta t_f = 3.125\times10^{-3}$. From Figure \figref{conv_penning_long_right} we observe that it does not follow the theoretical scaling of $\mc{O}\LRp{\Delta t_g^{2k}}$ and shows convergence only 
at higher $k$ and smaller $\Delta t_g$. The Penning trap has three frequencies leading to three different time scales. The fastest one is the 
modified cyclotron frequency $\omega_+$, followed by the axial frequency $\omega_z$, and the slowest one being the magnetron frequency $\omega_-$ \cite{blaum2022}. For our selected parameters these values are $\omega_+ = 4.875$, $\omega_z=1.1$ and $\omega_- = 0.125$ which corresponds to time periods of $1.29$, $5.7$ and $50.3$ respectively. The higher modified cyclotron frequency as visible in Figure \figref{penning_pe} leads to a large
constant $C_{time}$ so that we need high $k$ and small $\Delta t_g$ in Theorem \theoref{conv_pic} to obtain convergence. It should be noted that since the number of time subdomains is $16$ this leads to virtually no speedup with the parareal algorithm compared to the time serial case. 

In order to reduce the effect of constant $C_{time}$ in the convergence behavior, in the bottom row of Figure \figref{conv_dt_pic_penning} we use an end time $T=1.2$, which is $16$ times smaller than the previous case of $T=19.2$. We correspondingly reduced the fine time step size and the coarse time step sizes by the 
same factor in order to perform the convergence study. In Figure \figref{conv_penning_small_right} we observe a better convergence compared to Figure \figref{conv_penning_long_right}. However, the scaling is only $\mc{O}\LRp{\Delta t_g^k}$ instead of the theoretically predicted $\mc{O}\LRp{\Delta t_g^{2k}}$. At the time of writing, we do not yet know the reason for this order reduction and we noticed that further reducing the final time and time steps sizes did not help in improving the order.
We also observed similar convergence behavior when using PIF as coarse propagator for the Penning trap, and hence the behavior is linked to the dynamics of the test case rather than the choice of the coarse propagator. 

For the  Landau damping test case with PIF as the coarse propagator we observe the theoretical scaling of $\mc{O}\LRp{\Delta t_g^{2k}}$ in 
Figure \figref{conv_dt_pif_landau}, whereas, with PIC as the coarse propagator, the error is completely dominated by the statistical noise. Hence, we do 
not observe any convergence with respect to the coarse time step size in Figure \figref{conv_dt_pic_landau}. Thus in this case it is not the dynamics of the test 
case but rather the choice of the coarse propagator and the associated number of particles, mesh size which lead to different convergence behaviors. 
For the two-stream instability test case, we observed similar convergence behavior with respect to coarse time step sizes and the choice of the coarse propagator as that of the Landau damping. 

\begin{figure}[h!b!t!]
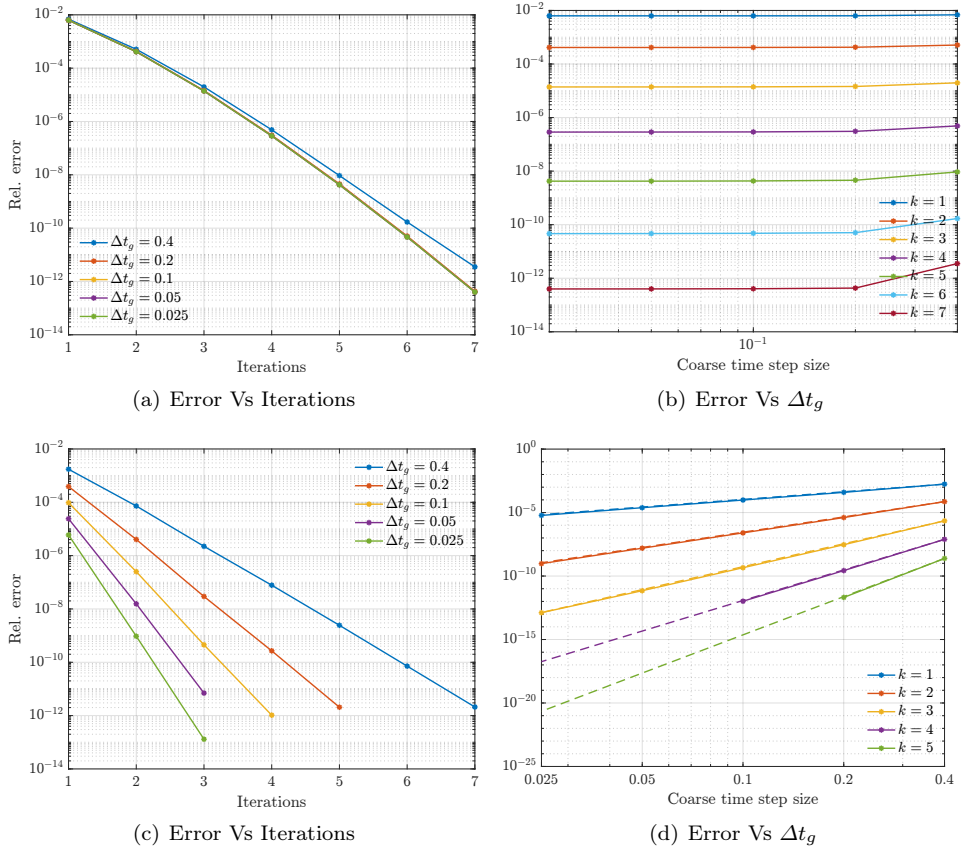

    \subfigure[Error Vs Iterations]{
    \includegraphics[width=0.48\columnwidth]{LandauDamping_MaxlocalError_Vs_Iter_16_cube_Pc_640_coarse_pic_all_coarse_dt.pdf}
  }
  \subfigure[Error Vs $\Delta t_g$]{
    \includegraphics[width=0.46\columnwidth]{LandauDamping_MaxlocalError_Vs_coarse_dt_16_cube_Pc_640_coarse_pic.pdf}
    \figlab{conv_dt_pic_landau}
  }
  \subfigure[Error Vs Iterations]{
    \includegraphics[width=0.48\columnwidth]{LandauDamping_MaxlocalError_Vs_Iter_16_cube_Pc_640_coarse_pif_all_coarse_dt.pdf}
  }
  \subfigure[Error Vs $\Delta t_g$]{
    \includegraphics[width=0.46\columnwidth]{LandauDamping_MaxlocalError_Vs_coarse_dt_16_cube_Pc_640_coarse_pif.pdf}
    \figlab{conv_dt_pif_landau}
  }
    \caption{Landau damping: Relative error in position $\xb$ versus the coarse time step size $\Delta t_g$ and iterations $k$ with PIC as the coarse propagator (top row) and NUFFT PIF as the coarse propagator (bottom row). The parameters are $N_m=16^3$, $P_c=640$, $T=19.2$ and $\Delta t_f = 3.125\times10^{-3}$. The dashed lines in the right column of bottom row represent the theoretical scaling $\mc{O}\LRp{\Delta t_g^{2k}}$ obtained from Theorem \theoref{conv_pif}. The NUFFT tolerance for the fine propagator in the top row is $10^{-12}$ whereas both the fine and the coarse NUFFT tolerances in the bottom row are chosen as $10^{-6}$ to eliminate the spatial error.}   
\figlab{conv_dt_pic_pif_landau}
\end{figure}

\begin{figure}[h!b!t!]
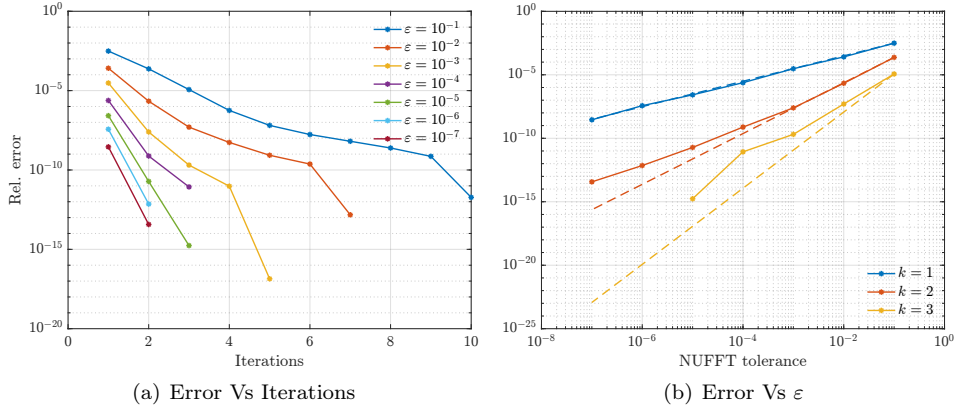

    \subfigure[Error Vs Iterations]{
    \includegraphics[width=0.48\columnwidth]{LandauDamping_MaxlocalError_Vs_Iter_16_cube_Pc_640_all_epsilon.pdf}
  }
  \subfigure[Error Vs $\varepsilon$]{
    \includegraphics[width=0.46\columnwidth]{LandauDamping_MaxlocalError_Vs_epsilon_16_cube_Pc_640.pdf}
    \figlab{conv_epsilon_right}
  }
    \caption{Landau damping: Relative error in position $\xb$ versus the NUFFT tolerance $\varepsilon$ and iterations $k$ with NUFFT PIF as the coarse propagator. The parameters are $N_m=16^3$, $P_c=640$, $T=19.2$ and $\Delta t_f = \Delta t_g = 0.05$. The dashed lines in the right figure represent the theoretical scaling $\mc{O}\LRp{\varepsilon^{k}}$ obtained from Theorem \theoref{conv_pif}.}   
\figlab{conv_epsilon}
\end{figure}

\subsubsection{Convergence with coarse NUFFT tolerance}
Finally, in Figure \figref{conv_epsilon} we verify the convergence with respect to NUFFT tolerance when PIF is used as the coarse propagator for the Landau damping mini-app. From Figure \figref{conv_epsilon_right} we see good agreement of the numerical results with the predicted theoretical scaling of $\mc{O}\LRp{\varepsilon^k}$ from Theorem \theoref{conv_pif}. We were also able to verify similar convergence results for the two-stream instability as well as the Penning trap mini-apps with PIF as the coarse propagators.

\subsection{Parallel scaling study}

We select the best coarse propagator parameter combinations for each of the mini-apps by performing a comprehensive parameter study in Appendix \secref{parameter_study}. We choose the tolerances for fine propagator NUFFT as well as parareal by considering the errors in the conservation of different quantities in Appendix \secref{conservation}. Below is a summary of inferences from the parameter study:

\begin{itemize}
    \item For larger fine time step sizes and coarser NUFFT tolerances in the fine propagator, PIC (with possible time coarsening) gives the 
    least time to solution.
    \item If we want highly accurate solutions with small fine time step sizes and stricter NUFFT tolerances, PIF with a coarse NUFFT tolerance is the best combination if time coarsening is possible.
    \item If the test case does not allow much time coarsening (e.g. in the Penning trap example) then PIC on the coarse level gives the least time to solution.
\end{itemize}
 Even though the inferences are made only with respect to the mini-apps considered, they mostly generalize to other test cases, since the mini-apps are selected to be representative of different scenarios in kinetic plasma simulations. 

\begin{figure}[h!b!t!]
\centering
    \includegraphics[width=0.75\columnwidth]{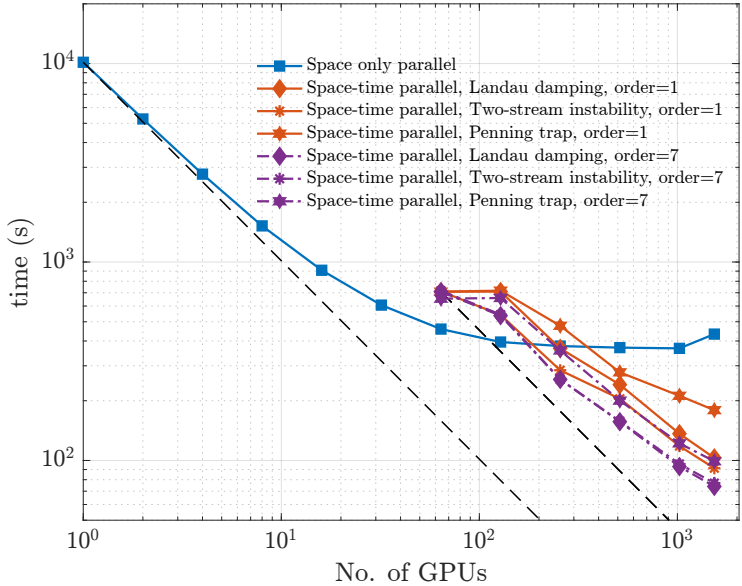}
    \caption{Timings for space-time parallelization versus spatial parallelization alone for all the mini-apps. The fine propagator is the PIF scheme with $128^3$ modes, $10$ particles per mode and time step size $\Delta t_f = 0.003125$.}   
    \figlab{scaling_study}
\end{figure}

With the guidance of the parameter study in Figures \figref{landau_heatmap} and \figref{penning_heatmap} we perform a parallel scaling study
for all the mini-apps. Linear or cloud-in-cell shape function is used and the problem size is $N_m=128^3$, $P_c=10$ which amounts to $N_p=20,971,520$. We take the fine time step size to be 
$\Delta t_f=0.003125$ and $T=19.2$. As explained in Appendix \secref{heatmaps} we switch to space-time parallelization and start
allocating resources in the time parallelization after the efficiency drops below $50\%$ for the spatial parallelization. For this problem size it happens after $32$ GPUs.
Hence, we take $32$ GPUs for spatial parallelization and use $2,4,8,16,32$ and $48$ GPUs along time direction for the scaling study.
The spatial parallelization of PIF schemes is more or less independent of the test case. In order to reduce the usage of core hours we ran only 
the Penning trap mini-app and use it as a reference to compare against the space-time parallelization of all three mini-apps. In a similar effort to 
reduce the amount of computing time, since the timing of the spatial parallelization of PIF schemes scales linearly with the number of time steps, and relatively independent of the test case, we ran the Penning trap simulation for 768 time steps ($T=2.4$) and
scale it by a factor of $8$ to get the final reference timing for $6144$ time steps ($T=19.2$).

For the coarse propagator we take PIF with $\varepsilon=10^{-3}$ and $\Delta t_g=0.05$ for the Landau damping and PIF 
with $\varepsilon = 10^{-4}$ and $\Delta t_g=0.05$ for the two-stream instability test cases based on the parameter study in Appendix \secref{heatmaps}. For both these cases we use the parareal posed on the entire time domain. For the Penning trap, we take PIC as coarse propagator with $16$ blocks for the reasons explained in Section \secref{multiblock}. From numerical experiments we found that we can coarsen the time step size a bit for the Penning trap for this problem size and hence we select $\Delta
t_g=0.0125$ (coarsened by a factor 4 with respect to the fine time step size). The strong scaling curves for the combined space-time parallelization are shown in red in Figure
\figref{scaling_study}. We can see that for $2$ and $4$ GPUs in the time parallelization, the space-time parallelization does not pay off and it takes
more time than spatial parallelization alone. This is because, for the parareal algorithm to have speedup for $2$ and $4$ subdomains it need to 
converge in $1$ and less than $4$ iterations which does not typically happen. The cross-over point for space-time parallelization happens at 
$8$ GPUs where it takes comparable time to spatial parallelization alone. Starting from $16$ GPUs onward we start to see speedup for all the mini-apps
compared to spatial parallelization alone. The maximum speedup obtained at $1536$ ($32\times48$) GPUs is $4.2$, $4.7$ and $2.4$ for Landau damping, two-stream instability and Penning trap mini-apps respectively. From the timings of the individual components in the scaling study we identified that for large number of modes and at high number of GPUs for the spatial parallelization the \texttt{MPI\_Allreduce} step after scatter becomes a dominant factor. This cost is the same for the fine and coarse propagators without time coarsening. Hence, it is one of the major reasons for the smaller speedup in the case of the Penning trap as we are able to coarsen the time step size only by a factor of $4$ compared to $16$ in the other two mini-apps. 

\subsubsection{PIF with higher-order shape function}
One of the advantages of PIF schemes compared to PIC schemes as mentioned in \cite{mitchell2019efficient} is that higher-order 
shape functions can be used at a similar computational cost as lower-order shape functions. This is because they are precomputed in the Fourier space at 
the beginning and reused for all the time steps. Hence, we evaluate here the scaling with a higher-order shape function in the fine PIF propagator.
Now, if PIC is used as a coarse propagator we would still like to use the linear shape function
and select the mesh size $h$ such that we approximate the Fourier spectrum of the higher-order PIF shape function. This has two advantages: i) the cost of 
the coarse propagator does not increase ii) it typically leads to larger $h$ or less number of grid points in the PIC scheme which in turn reduces
the \texttt{MPI\_Allreduce} cost of spatial parallelization in the coarse propagator compared to the fine propagator. For large number of modes/grid cells and
high GPU counts for spatial parallelization the latter advantage is significant as we see from the Penning trap test case as follows. We select 
an order $7$ B-spline function for the PIF scheme in the fine propagator and it has a Fourier spectrum close to linear shape function of mesh size $2h$. Hence for the Penning trap test case we select PIC with mesh size $2h$, $16$ blocks and no coarsening in time step size $\Delta t_g=0.003125$ (based on numerical experiments) as the 
coarse propagator and PIF with order $7$ B-spline as the fine propagator. For the Landau damping and two-stream instability test cases we take the same 
PIF coarse propagator combinations as before but with order $7$ B-spline shape functions. We now observe that the purple scaling curves for the space-time 
parallelization are better than the red ones in Figure \figref{scaling_study} for all the mini-apps. The maximum speedup obtained at $1536$ ($32\times48$) GPUs for the higher-order shape function is $5.8$, $5.6$ and $4.4$ for Landau damping, two-stream instability and Penning trap mini-apps respectively. The 
main reason for the improved speedups in the Landau damping and two-stream instability mini-apps is the reduced noise from higher-order shape function which
in turn reduces the constant $C_{time}$ in Theorem \theoref{conv_pif}. For the Penning trap it is mainly due to the reduced cost associated with the \texttt{MPI\_Allreduce}
operation in the coarse propagator as the number of grid points is now $8$ times smaller compared to the linear shape function with mesh size $h$ before.
One more interesting avenue to reduce the cost associated with the \texttt{MPI\_Allreduce} operation in the coarse propagator is to use single precision for the density 
field which reduces the communication volume. We intend to pursue that direction in our future work. 

The scaling studies show great promise for speedups with space-time parallelization compared to spatial parallelization alone.
%although more work 
%is still needed in the HPC optimization front. 
\emph{We get a particle push rate of approximately one billion particles per second in all the test cases 
from Figure \figref{scaling_study} at 1536 GPUs.} This makes PIF schemes a viable candidate for large-scale, 
production-level applications in plasma physics and beyond, suited to exascale computing architectures.  

\section{Conclusions}
\seclab{conclusions}

In this work we present a parareal approach in the phase-space for time parallelization of particle-in-Fourier schemes applied to Vlasov-Poisson system of 
equations in kinetic plasma simulations. We choose the coarse propagators for the parareal algorithm by selecting 
PIF with NUFFT of lenient tolerance or the standard PIC scheme. They are accompanied with or without coarsening of time step sizes depending on the test case and application. We perform an error analysis for the parareal algorithm with explicit dependence on the coarse 
discretization parameters such as the mesh size, particles per cell, NUFFT tolerance and the coarse time step size.

We verify the theoretical results numerically using Landau damping, two-stream instability, and the Penning trap, which are some of 
the standard benchmark problems in kinetic plasma physics simulations. We observe that for test cases with oscillatory solutions and multiple time scales 
such as the Penning trap coarsening in time leads to large errors and high number of iterations. Also, parareal used on the entire time domain in these
cases converges very slowly and hence needs to be applied in multiple blocks. 

Finally, we conduct a scaling
study up to $1536$ A100 GPUs and observe that overall the space-time parallelization gives approximately 
$4-6$ times speedup for Landau damping and two-stream instability test cases and $2-4$ times speedup for the Penning trap test case compared to spatial parallelization alone. We achieve a push rate of around 1 billion particles per second for all the test cases at 1536 GPUs. Thus the space-time parallel PIF schemes show promise for large-scale kinetic plasma simulations with excellent stability and conservation properties.

In terms of future works, we are planning to perform massively space-time parallel PIF simulations with a focus on HPC optimization and scalability. A natural extension of the parareal algorithm for electrostatic PIF schemes is to extend it to electromagnetic PIF schemes for simulating
Vlasov-Maxwell systems. However, that involves Maxwell's equations which are time dependent and hyperbolic in nature and this may present an issue 
for the parareal algorithm. In those cases the parareal scheme for 
the particle equations can be combined with parallel-in-time schemes 
suited for purely hyperbolic equations such as ParaExp and 
ParaDiag \cite{gander2013paraexp,gander2020paradiag}. Recently, a Fourier spectral 
scheme similar to PIF has been proposed in the context of immersed boundary methods
in \cite{chen2024fourier}. Our algorithm can be applied for this scheme, too, and hence 
can find application in the field of fluid structure interactions in 
computational biology.

\appendix
\section{Parameter study}
\seclab{parameter_study}
We consider PIF and PIC coarse propagators along with different coarse time step sizes and find the parameter combinations which provide the maximum speedup for the parareal algorithm. For the parameter study we consider the following problem size: $N_m = 64^3$, $P_c=10$, end time $T=19.2$ and two fine time step sizes $\Delta t_f = 0.05$ and $\Delta t_f=0.003125$. The reason for the selection of a coarse and fine time step size is due to the associated energy conservation which improves with the decrease
in time step size as $\mc{O}\LRp{\Delta t_f^2}$ for the velocity Verlet or the Boris time integrator. Hence, these two scenarios represent applications which may require different levels of energy conservation. 

\subsection{Verification}
\begin{figure}[h!b!t!]
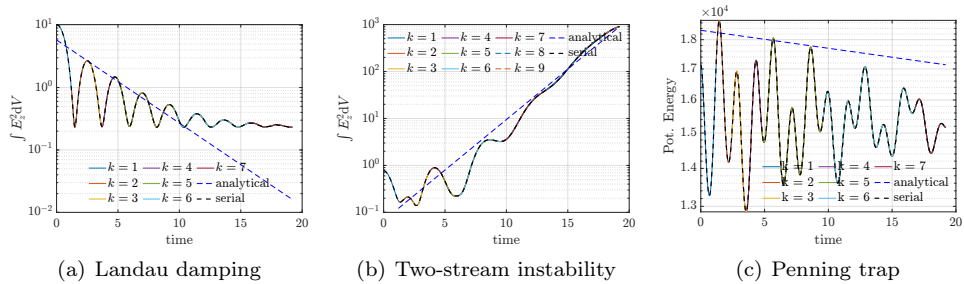

    \subfigure[Landau damping]{
    \includegraphics[width=0.305\columnwidth]{LandauDamping_weak_damp_rate_64_cube_Pc_10.pdf}
  }
    \subfigure[Two-stream instability]{
    \includegraphics[width=0.305\columnwidth]{TSI_growth_rate_64_cube_Pc_10.pdf}
  }
    \subfigure[Penning trap]{
    \includegraphics[width=0.305\columnwidth]{PenningTrapPotential_Energy_64_cube_Pc_10.pdf}
    \figlab{penning_pe}
  }
    \caption{Electric field energy in the $z-$direction versus time for the Landau damping (left) and the two-stream instability (center) test cases 
    and potential energy versus time for the Penning trap (right) test case. The number of modes and the number of particles per mode are 
    $64^3$ and $10$ respectively. PIF with NUFFT tolerance $10^{-7}$ and fine time step size $\Delta t_f=0.003125$ is used as the fine propagator. 
    PIC with coarse time step size $\Delta t_g=0.05$ is used for the Landau damping and the 
    two-stream instability test cases, whereas for Penning trap, PIC with $\Delta t_g = 0.003125$ is used as the coarse propagator. The number of time subdomains is $16$ and the stopping tolerance for parareal is $10^{-8}$.}
    \figlab{sol_all_mini}
\end{figure}

In Figure \figref{sol_all_mini}, we show the electric field energy in the \\ $z-$direction versus time for the Landau damping and the two-stream instability mini-apps and the potential energy versus time for the Penning trap mini-app for the case of $\Delta t_f = 0.003125$. PIC with a coarse time
step size of $\Delta t_g=0.05$ for the Landau damping and the two-stream instability and $\Delta t_g=\Delta t_f$ for the Penning trap is used as the
coarse propagator. As we can see from Figure \figref{sol_all_mini} starting from the first iteration of parareal itself the quantities 
match well with the serial time stepping as well as the analytical rates from the dispersion relation. 

\subsection{Conservation}
\seclab{conservation}
\begin{figure}[h!b!t!]
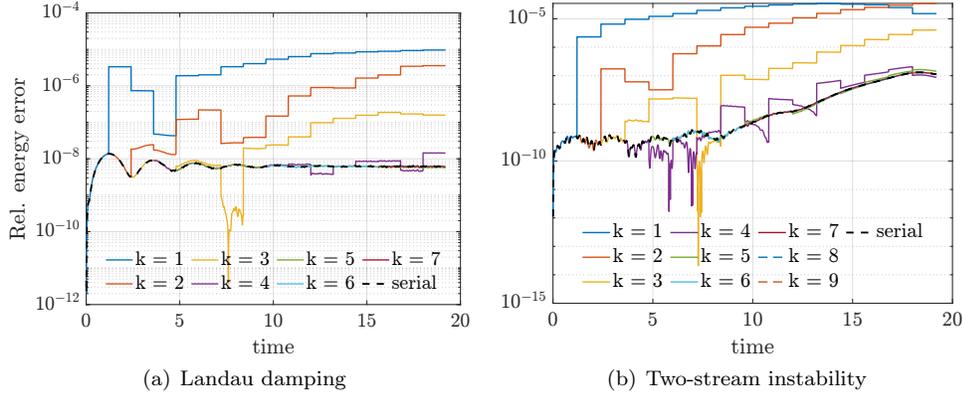

    \subfigure[Landau damping]{
    \includegraphics[width=0.47\columnwidth]{LandauDamping_Energy_error_64_cube_Pc_10.pdf}
    \figlab{landau_energy}
  }
    \subfigure[Two-stream instability]{
    \includegraphics[width=0.47\columnwidth]{TSI_Energy_error_64_cube_Pc_10.pdf}
    \figlab{twostream_energy}
  }
    \caption{Relative energy error for the Landau damping (left) and the two-stream instability (right) test cases.} 
    \figlab{landau_twostream_energy}
\end{figure}

\begin{figure}[h!b!t!]
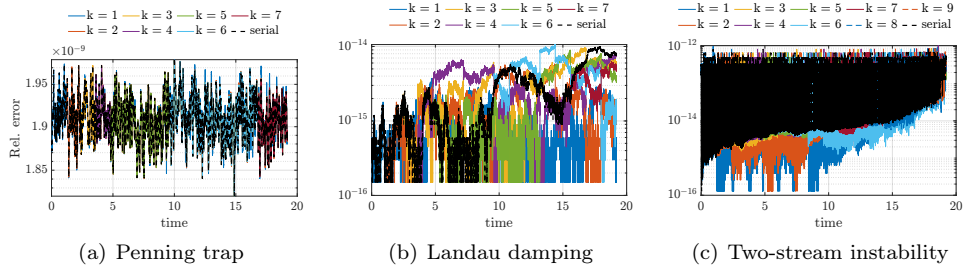

    \subfigure[Penning trap]{
    \includegraphics[width=0.3\columnwidth]{PenningTrap_charge_error_64_cube_Pc_10.pdf}
    \figlab{penning_charge}
  }
    \subfigure[Landau damping]{
    \includegraphics[width=0.3\columnwidth]{LandauDamping_momentum_error_64_cube_Pc_10.pdf}
    \figlab{landau_momentum}
  }
    \subfigure[Two-stream instability]{
    \includegraphics[width=0.3\columnwidth]{TSI_momentum_error_64_cube_Pc_10.pdf}
    \figlab{twostream_momentum}
  }
    \caption{Relative charge error in the Penning trap (left) and momentum errors in the Landau damping (center) and the two-stream instability (right) test cases.} 
    \figlab{charge_momentum_errors}
\end{figure}

In Figures \figref{landau_twostream_energy} and \figref{charge_momentum_errors} we show the conservation of different quantities for the three test cases.
From Figures \figref{landau_energy} and \figref{twostream_energy} the relative error in energy conservation differs for different parareal iterations and becomes similar to the serial time stepping only close to convergence. Since PIC is used as the coarse propagator and it is not energy conserving, we can see that the
relative error in the earlier iterations is orders of magnitude higher and has an increasing behavior than the error obtained in the event of convergence. \emph{This figure clearly shows the benefit of PIF schemes over the standard PIC scheme with regards to improved and stable relative errors in energy conservation which is very important for long time integration simulations.} We note here that there are many other flavors of PIC which differ in terms of field solvers, shape
functions, sampling approaches and noise reduction strategies. Investigating them as coarse propagators for PIF is an interesting avenue which will be carried out elsewhere in the future. Now, in terms of momentum conservation we always have a relative error close to machine precision, which
is independent of the parareal iterations and the NUFFT tolerance as shown in Figures \figref{landau_momentum} and \figref{twostream_momentum}. 
The charge conservation, on the other hand, depends on the tolerance of the NUFFT for the fine propagator, whereas, the
energy conservation depends both on the NUFFT tolerance as well as the stopping tolerance for the parareal algorithm.
We show the charge conservation error for the Penning trap test case in Figure \figref{penning_charge}, which is very similar for the Landau damping and two-stream instability test cases and therefore not shown. For the case of the Penning trap, we add external electric and magnetic fields, and the external electric field also depends on the particle positions as
shown in equation \eqnref{penning_ext_efield}. This makes it difficult to measure the relative errors in energy and momentum conservation. However, we verified that the total energy and momentum magnitudes match well with the serial results.
We select the tolerances for the parareal and the fine propagator NUFFT such that the orders of magnitude of errors in the energy and charge conservation are comparable and match that of the serial time stepping while the momentum conservation is always satisfied irrespective of them. This is important because a tight tolerance for the fine propagator NUFFT and a lenient tolerance for 
parareal can lead to a large speedup of the parareal algorithm with respect to serial time stepping, whereas, a tight tolerance for parareal and a lenient
tolerance for NUFFT can lead to less or no speedup. Thus selecting certain combinations can lead to bloated speedups for the parareal algorithm as mentioned in \cite{gotschel2020twelve} 
without bringing in any additional value for the application. Here, we select a tolerance of $10^{-7}$ for the fine propagator NUFFT and $10^{-8}$ for the parareal, for the case of $\Delta t_f=0.003125$, based on the relative errors in the conservation in Figures \figref{landau_twostream_energy} and \figref{charge_momentum_errors}.       

\subsection{Convergence of error across time subdomains}
\begin{figure}[h!b!t!]
  %  \subfigure[]{
  %  \includegraphics[width=0.31\columnwidth]{}
  %}
  %  \subfigure[]{
  %  \includegraphics[width=0.31\columnwidth]{}
  %}
  %  \subfigure[]{
  %  \includegraphics[width=0.31\columnwidth]{}
  %}
    \subfigure[Landau damping]{
    \includegraphics[width=0.31\columnwidth]{LandauDamping_PlocalError_Vs_Iter_64_cube_Pc_10.pdf}
  }
    \subfigure[Two-stream instability]{
    \includegraphics[width=0.31\columnwidth]{TSI_PlocalError_Vs_Iter_64_cube_Pc_10.pdf}
  }
    \subfigure[Penning trap]{
    \includegraphics[width=0.31\columnwidth]{PenningTrap_PlocalError_Vs_Iter_64_cube_Pc_10.pdf}
  }
    \caption{Relative error in the velocity $\vb$ of the particles versus time for the Landau damping (left), the two-stream instability (center) and the Penning trap (right) test cases. The parareal tolerance $10^{-8}$ is marked with the black dashed line.}  
    \figlab{v_error}
\end{figure}

In Figure \figref{v_error} we show the relative error in velocities of the particles based on equation \eqnref{stop_criteria} in each time subdomain for
all the mini-apps. The relative error in positions look similar and hence not shown. We see that the error in the later time subdomains also decreases comparably to the initial subdomains confirming that the
parareal algorithm performs as an approximate Newton's method as explained in \cite{gander2007analysis}. It should be noted that we do not perform a global reduction operation across the time subdomains to check the convergence of the parareal algorithm as this affects the performance. Instead, each time 
subdomain or MPI rank exits the parareal iteration loop as soon as the stopping criterion in equation \eqnref{stop_criteria} is satisfied locally and the previous time subdomain is converged\footnote{This condition is very natural and also needed so that there is a receiving rank for the message sent by the current rank.}. This is the reason for missing data points for some of the time subdomains in later iteration curves in Figure \figref{v_error}.     

We performed a similar study on conservation  as in Figures \figref{landau_twostream_energy}, \figref{charge_momentum_errors} for the coarser time step size of $\Delta t_f=0.05$ and selected a tolerance of 
$10^{-4}$ for the fine propagator NUFFT and $10^{-5}$ for the parareal. We do not repeat those figures for the sake of brevity and to avoid repetition. 

\subsection{Coarse propagator parameter search}
\seclab{heatmaps}
\begin{figure}[h!b!t!]
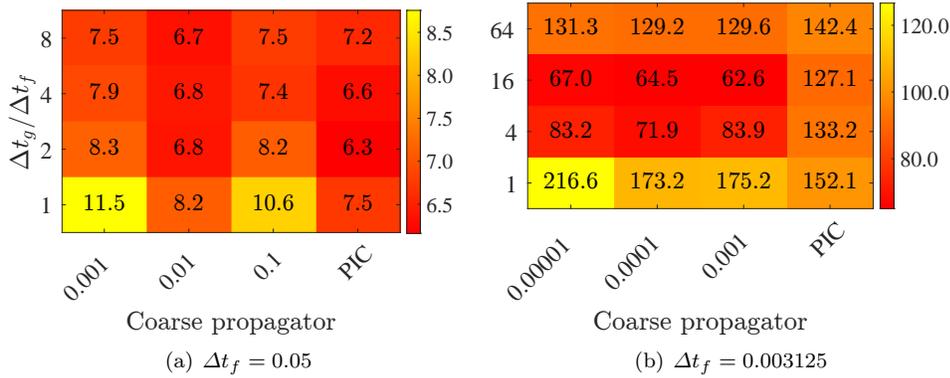

    \subfigure[$\Delta t_f = 0.05$]{
    \includegraphics[width=0.46\columnwidth]{LandauDamping_speedup_heatmap_dt_05_Pc_10_64_cube.pdf}
    \figlab{landau_heatmap_left}
  }
    \subfigure[$\Delta t_f = 0.003125$]{
    \includegraphics[width=0.48\columnwidth]{LandauDamping_speedup_heatmap_dt_003125_Pc_10_64_cube.pdf}
    \figlab{landau_heatmap_right}
  }
    \caption{Landau damping: Timings of different coarse propagators with different coarse time step sizes in the parareal algorithm for the PIF scheme with $64^3$ modes, $10$ particles per mode. $16$ GPUs are used for the time parallelization, and $\LRc{2,4}$ GPUs are used for the spatial parallelization of $\Delta t_f=0.05$ and  $\Delta t_f=0.003125$ respectively.}
    \figlab{landau_heatmap}
\end{figure}

In Figures \figref{landau_heatmap} and \figref{penning_heatmap} we vary the coarse time step size and the coarse NUFFT tolerance (when PIF is used as the coarse propagator) or use PIC as the coarse propagator and find out the combination which gives the least time to solution for each of the three mini-apps. We select the number of 
GPUs for the spatial parallelization such that the parallel efficiency is greater than fifty percent, i.e., we switch to space-time parallelization and start allocating resources in the time parallelization after the efficiency drops below fifty percent for the spatial parallelization. This is because of the low parallel efficiency of the parareal algorithm, and 
to have an efficiency of $50\%$, parareal needs to converge in two iterations, which is typically not possible in most scenarios. Since the complexity 
of the NUFFT scales as $\mc{O}\LRp{(|log\varepsilon|+1)^dN_p+N_mlogN_m}$ \cite{barnett2019parallel} the smaller the tolerance $\varepsilon$, the higher is the work per MPI rank, and the spatial parallelization scales to more number of MPI ranks or GPUs. For $\Delta t_f=0.05$ and the fine propagator NUFFT tolerance ($\varepsilon$) of $10^{-4}$ we choose 2 GPUs for the spatial parallelization, and for $\Delta t_f=0.003125$, fine propagator $\varepsilon=10^{-7}$, we take 4 GPUs based on the above criterion for 
parallel efficiency. We take the number of GPUs for the time parallelization as $16$. The reference time to solution based on the serial time stepping is around 
$7.8$ and $128$ seconds on $32$ GPUs and $64$ GPUs for $\Delta t_f=0.05$ and $\Delta t_f=0.003125$ respectively for all the mini-apps.      

From Figure \figref{landau_heatmap_left}, for Landau damping, we see that the combination which leads to the least time to solution is PIC as coarse propagator with the ratio of coarse to fine time step sizes as $2$, i.e., $\Delta t_g=0.1$. PIF with $\varepsilon=0.01$ also yields a comparable time to solution for $\Delta t_g/\Delta t_f=2,4$ and $8$. In general, since PIF is a more accurate and costlier coarse propagator than PIC it allows for more coarsening in the time step sizes than PIC for this test case.
For $\Delta t_f=0.003125$, the scenario changes and now PIF with $\varepsilon=0.001$ and coarsening ratio $16$ gives the least time. The other NUFFT tolerances also yield a comparable time for the same coarsening ratio. Since the fine NUFFT tolerance for $\Delta t_f=0.003125$ is $10^{-7}$ we only tested for coarse NUFFT tolerances $10^{-3},10^{-4}$ and $10^{-5}$, as usually values too close to the fine tolerance or too coarse did not result in optimal combinations. We also observe from Figure \figref{landau_heatmap_right} that PIC as coarse propagator takes more than
twice the time as the best combination. This is due to the tighter tolerance of parareal ($10^{-8}$) for this case compared to 
$\Delta t_f=0.05$ ($10^{-5}$) which requires a more accurate coarse propagator. For the
two-stream instability mini-app we performed a similar study and the inferences are mostly similar to that of the Landau damping except that for $\Delta t_f=0.003125$, PIF with $\varepsilon=10^{-4}$ and coarsening ratio $16$ is the best combination. 

%\begin{figure}[h!b!t!]
%    \subfigure[$\Delta t_f = 0.05$]{
%    \includegraphics[width=0.49\columnwidth]{}
%    \figlab{tsi_heatmap_left}
%  }
%    \subfigure[$\Delta t_f = 0.003125$]{
%    \includegraphics[width=0.51\columnwidth]{}
%    \figlab{tsi_heatmap_right}
%  }
%    \caption{Two-stream instability: Timings of different coarse propagators with different coarse time step sizes in the ParaPIF algorithm for the PIF scheme with $64^3$ modes, $10$ particles per mode. $16$ GPUs are used for the time parallelization and $\LRc{2,4}$ GPUs are used for the spatial parallelization of $\Delta t_f=0.05$ and  $\Delta t_f=0.003125$ respectively.}   
%    \figlab{tsi_heatmap}
%\end{figure}

\begin{figure}[h!b!t!]
    \subfigure[$\Delta t_f = 0.05$]{
    \includegraphics[width=0.46\columnwidth]{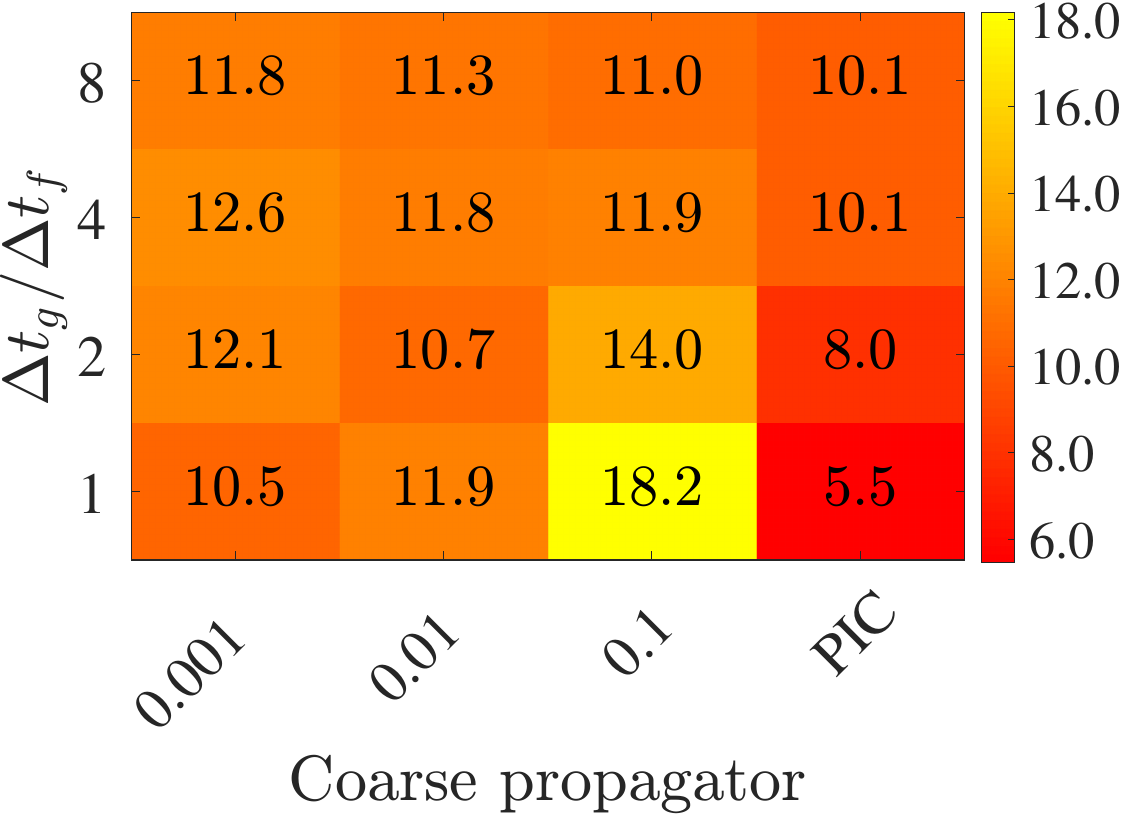}
    \figlab{penning_heatmap_left}
  }
    \subfigure[$\Delta t_f = 0.003125$]{
    \includegraphics[width=0.48\columnwidth]{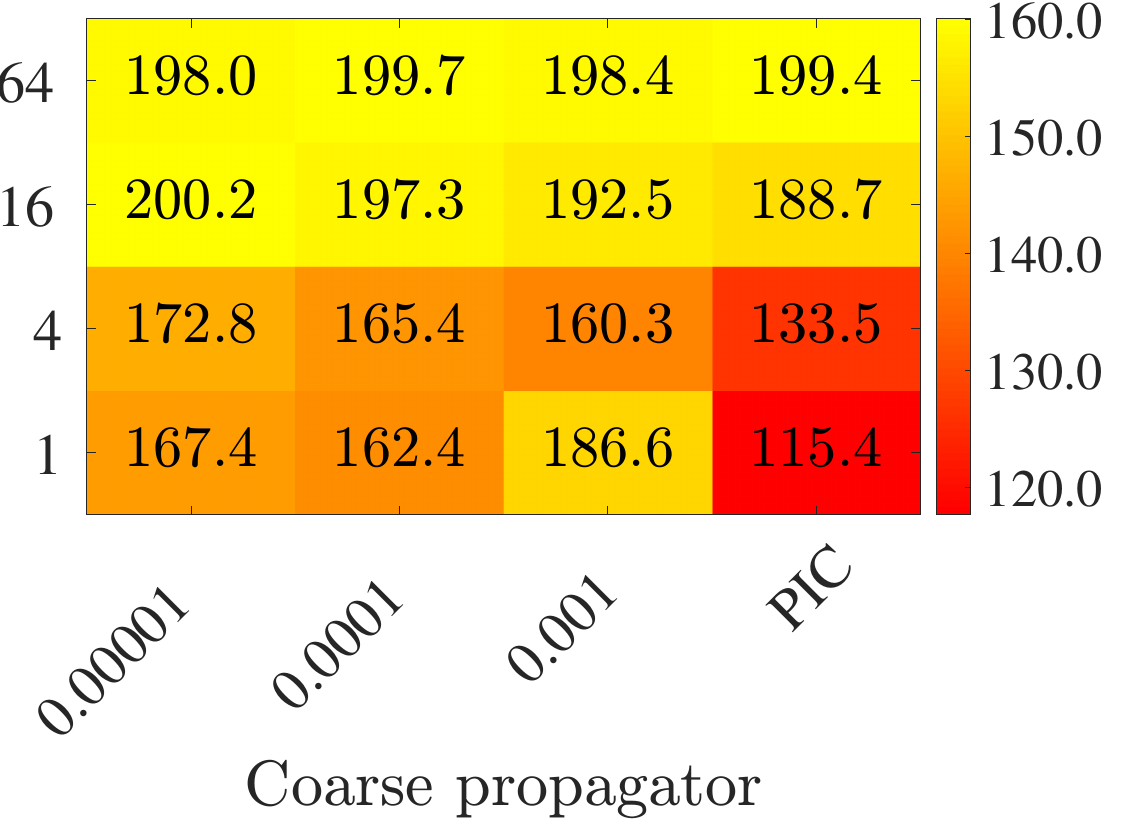}
    \figlab{penning_heatmap_right}
  }
    \caption{Penning trap: Timings of different coarse propagators with different coarse time step sizes in the parareal algorithm for the PIF scheme with $64^3$ modes, $10$ particles per mode. $16$ GPUs are used for the time parallelization and $\LRc{2,4}$ GPUs are used for the spatial parallelization of $\Delta t_f=0.05$ and  $\Delta t_f=0.003125$ respectively.}   
    \figlab{penning_heatmap}
\end{figure}

In the case of the Penning trap, in Figure \figref{penning_heatmap}, we notice that coarsening in time leads to large increase in time to solution which is already 
evident from Figure \figref{conv_penning_long_right} due to the oscillatory nature of the solution. As mentioned in Section \secref{conv_time_step_size}, it is relatively independent of the 
nature of the coarse propagator. In the absence of time coarsening, we would expect the cheapest coarse propagator, i.e., PIC, to give the least time to solution and this is what we see in Figure \figref{penning_heatmap}. We also notice from Figure \figref{penning_heatmap_right} that the least time to solution is still much higher than those for the Landau damping and the two-stream instability test cases. Compared to the reference serial time stepping, we get speedups of $1.2, 1.2$ and $1.4$ for
$\Delta t_f=0.05$, and $2.0,2.0$ and $1.1$ for $\Delta t_f=0.003125$, with the best combinations for the Landau damping, the two-stream instability and the Penning 
trap mini-apps respectively.

\subsection{Multi-block parareal}
\seclab{multiblock}
The convergence and the time to solution of the parareal algorithm can be improved by applying it in multiple blocks or windows instead of in the entire time domain as in \cite{aubanel2011scheduling,nielsen2018communication}. However, this strategy increases the communication time and hence there is a trade-off. Here we adopt that and with $16$ equi-sized blocks for the Penning trap mini-app, using PIC as a coarse propagator for $\Delta t_f=0.003125$,
obtained the time to solution as $82.6$. This
increases the speedup of parareal to $1.5$ compared to the previous 
value of $1.1$ with one block. We also observed that even for the 
Landau damping and the two-stream instability mini-apps, with $\Delta t_f=0.003125$, using PIC as a 
coarse propagator in multiple blocks instead of one block help to 
reduce the time to solution. However, for those cases, it is still not 
able to beat the time to solution obtained with the best combination PIF schemes in the coarse propagators. We also noticed from our experiments, that applying PIF as a coarse propagator in multiple blocks is generally not much beneficial, as the increased initialization and communication costs outweigh the marginal improvement obtained in the iteration counts.

\section*{Reproducibility of computational results}
The source code and data for the simulation is available at \\ https://github.com/srikrrish/ippl/tree/parapif-paper-v1.0.0.

\section*{Acknowledgments}
We would like to acknowledge the computing time in the JUWELS Booster supercomputer provided under the project CSTMA. We would like to thank
Felix Schurk for working on some of the post-processing scripts for the multi-block version of the algorithm for his seminar project. The first author
also would like to thank Antoine Cerfon, Martin Gander and Changxiao Shen for fruitful discussions on this topic. The authors would like to thank the anonymous referees for their critical and useful comments that significantly improved the paper.

\bibliographystyle{siamplain}
\bibliography{references,self}
\end{document}